\newcommand{\p}{\mathfrak{p}}
\newcommand{\cO}{\mathcal{O}}
\newcommand{\F}{\mathbb F}
\newcommand{\N}{\mathbb N}
\newcommand{\Q}{\mathbb Q}
\newcommand{\bP}{\mathbb{P}}
\renewcommand{\a}{\mathfrak{a}}
\renewcommand{\b}{\mathfrak{b}}
\newcommand{\g}{\mathfrak{g}}
\newcommand{\m}{\mathfrak{m}}
\newcommand{\gr}{\mathrm{gr}}
\newcommand{\Ker}{\mathrm{Ker}}
\newcommand{\Coker}{\mathrm{Coker}}
\newcommand{\ra}{\rightarrow}
\newcommand{\End}{\mathrm{End}}
\renewcommand{\hom}{\mathrm{Hom}}
\newcommand{\GL}{\mathrm{GL}}
\newcommand{\SL}{\mathrm{SL}}
\newcommand{\defeq}{\stackrel{\textrm{\tiny{def}}}{=}}
\newcommand{\sym}{\mathrm{Sym}}
\newcommand{\plim}{\varprojlim}
\newcommand{\matr}[4]{\begin{pmatrix}{#1}&{#2}\\
{#3}&{#4}\end{pmatrix}}
\newcommand{\smatr}[4]{\bigl(\begin{smallmatrix} {#1}& {#2}\\
{#3}&{#4}\end{smallmatrix}\bigl)}
\newtheorem{thm}{Th\'eor\`eme}[section]
\newtheorem{lem}[thm]{Lemme}
\newtheorem{cor}[thm]{Corollaire}
\newtheorem{prop}[thm]{Proposition}
\newtheorem{defn}[thm]{D\'efinition}
\newtheorem{rem}[thm]{Remarque}
\begin{document}

\title{Sur la fid\'elit\'e de certaines repr\'esentations de $\GL_2(F)$ sous une alg\`ebre d'Iwasawa}

\author{Yongquan Hu\footnote{Universit\'e de Rennes 1} \and Stefano Morra\footnote{Universit\'e de Versailles-Saint Quentin en Yvelines} \and Benjamin Schraen\footnote{Universit\'e de Versailles-Saint Quentin en Yvelines/CNRS}} \date{}
\maketitle
\begin{abstract}
Soit $F$ une extension finie de $\Q_p$, d'anneau des entiers $\cO_F$ et $E$ une extension finie de $\F_p$. L'action naturelle de $\cO_F^{\times}$ sur $\cO_F$ se prolonge alors en une action continue sur l'alg\`ebre d'Iwasawa $E[[\cO_F]]$.
Dans ce travail, on d\'emontre que les id\'eaux non nuls de $E[[\cO_F]]$ stables par $\cO_F^{\times}$ sont ouverts. En particulier, on en d\'eduit la fid\'elit\'e de l'action de l'alg\`ebre d'Iwasawa des matrices unipotentes sup\'erieures de $\mathrm{GL}_2(\cO_F)$ sur une repr\'esentation lisse irr\'eductible admissible de $\mathrm{GL}_2(F)$.
\end{abstract}
\tableofcontents

\section{Introduction}

Soient $p$ un nombre premier, $F$ une extension finie de $\Q_p$ et $E$ une extension de $\F_p$. Posons $G=\GL_2(F)$. Lorsque $F=\Q_p$, les travaux de Barthel-Livn\'e puis Breuil (\cite{BL}, \cite{BreuilGL2}) aboutissent \`a une classification des $E$-repr\'esentations lisses irr\'eductibles de $G$ ayant un caract\`ere central. Lorsque $F \neq \Q_p$, le m\^eme probl\`eme n'est pas r\'esolu. Malgr\'e des avanc\'es de Breuil et Pa\v{s}k\={u}nas (\cite{BreuilPaskunas}) permettant de construire des familles de repr\'esentations irr\'eductibles, on est encore loin de comprendre quels param\`etres doivent intervenir dans la classification de ces repr\'esentations irr\'eductibles \`a isomorphisme pr\`es (voir par exemple les travaux de l'un d'entre nous dans \cite{Hu}).

Dans ce travail, nous nous int\'eressons \`a un probl\`eme plus simple. Consid\'erons $U$ le sous-groupe compact de $G$ constitu\'e des matrices unipotentes sup\'erieures \`a coefficients dans $\cO_F$ l'anneau des entiers de $F$. Si $\pi$ est une repr\'esentation lisse de $G$ sur un $E$-espace vectoriel, l'action de $U$ sur $\pi$ s'\'etend naturellement en une action de l'alg\`ebre de groupe compl\'et\'ee, ou alg\`ebre d'Iwasawa, $E[[U]]$. Tout \'el\'ement de $\pi$ est annul\'e par un id\'eal ouvert de $E[[U]]$, par lissit\'e de $\pi$. Il est facile de voir qu'un tel id\'eal ne peut annuler toute la repr\'esentation $\pi$ lorsque $\pi$ est de dimension infinie. Cependant, il n'est pas clair \emph{a priori} qu'il n'existe pas d'id\'eal non nul de $E[[U]]$ annulant toute la repr\'esentation $\pi$. En d'autres termes, est-ce que $\pi$ est un module fid\`ele sous l'action de $E[[U]]$ ? Lorsque $F=\Q_p$, la question ne se pose pas car $E[[U]] \simeq E[[X]]$ est un anneau complet de valuation discr\`ete, ses id\'eaux non nuls sont donc tous ouverts. En revanche, $E[[U]]$ est, dans le cas g\'en\'eral, un anneau local r\'egulier de dimension $[F: \Q_p]$. Nous prouvons dans ce travail que si $\pi$ est une repr\'esentation lisse irr\'eductible de dimension infinie de $G$, l'action de $E[[U]]$ est toujours fid\`ele. Plus pr\'ecis\'ement nous prouvons le r\'esultat suivant. Soit $\Gamma=\matr{1+p\cO_F}001 \subset G$. Ce groupe agit contin\^ument sur $U$ par conjugaison, donc sur l'alg\`ebre compl\'et\'ee $E[[U]]$.
\begin{thm}
Soit $I \subset E[[U]]$ un id\'eal stable sous l'action d'un sous-groupe ouvert de $\Gamma$. Alors $I$ est de hauteur $0$ ou $[F:\Q_p]$. Autrement dit, $I=0$ ou $I$ est ouvert dans $E[[U]]$.
\end{thm}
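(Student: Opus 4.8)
The plan is to work with the identification $E[[U]]\cong R:=E[[\cO_F]]\cong E[[X_1,\dots,X_d]]$, a regular local ring with maximal ideal $\m=(X_1,\dots,X_d)$, where $d=[F:\Q_p]$, under which $\Gamma\cong 1+p\cO_F$ acts by the $E$-algebra automorphisms $\sigma_\gamma$ induced by multiplication by $\gamma$ on $\cO_F$. Fix a $\Z_p$-basis $e_1=1,e_2,\dots,e_d$ of $\cO_F$, put $X_i=[e_i]-1$, and let $\varphi$ be the $E$-linear endomorphism of $R$ with $\varphi(X_i)=X_i^p$; it is induced by multiplication by $p$ on $\cO_F$, it commutes with every $\sigma_\gamma$, and $\varphi^n(\m)R=(X_1^{p^n},\dots,X_d^{p^n})=:\m^{[p^n]}$. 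A direct computation gives, for $\gamma=1+p^nu$ with $u\in\cO_F$,
\[
\sigma_\gamma([g])=[g]\cdot\varphi^n([ug])\ \ (g\in\cO_F),\qquad \sigma_\gamma(X_i)-X_i=(1+X_i)\,\varphi^n\bigl([ue_i]-1\bigr)\in\m^{[p^n]};
\]
in particular the action of $\Gamma$ on $R$ is continuous. First I would reduce to prime ideals: the finitely many minimal primes of a nonzero $\Gamma'$-stable ideal $I$ are permuted by $\Gamma'$, and, since the action is continuous and ideals of $R$ are closed, the stabiliser of each of them is open in $\Gamma'$; hence it suffices to prove that a nonzero prime $\p$ of $R$ stable under an open subgroup $\Gamma'\supseteq 1+p^{N_0}\cO_F$ equals $\m$.

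Assume $\p\ne\m$, so $\dim(R/\p)\ge 1$; I aim for a contradiction by passing to $\gr_\m(R)=E[x_1,\dots,x_d]$. Let $(c_{il}(u))$ be the matrix of multiplication by $u$ on $\cO_F$ in the basis $(e_l)$ and $\bar c_{il}(u)\in\F_p$ its reduction, so $[ue_i]-1\equiv\sum_l\bar c_{il}(u)X_l\pmod{\m^2}$. Combining the formula above with a (divided-power) Taylor expansion, one finds for every $f\in\p$ of order $m$ and every $n\ge N_0$:
\[
\sigma_{1+p^nu}(f)-f\ \equiv\ \Theta^{(n)}_u(f):=\sum_{i,l}\bar c_{il}(u)\,X_l^{p^n}\,\partial_i f\pmod{\m^{m+p^n}},
\]
so the degree-$(m+p^n-1)$ part of $\sigma_{1+p^nu}(f)-f$ is $\Theta^{(n)}_u(\bar f)=\sum_{i,l}\bar c_{il}(u)\,x_l^{p^n}\,\partial_i\bar f$, which depends only on the leading form $\bar f$. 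Since $\sigma_{1+p^nu}(f)-f\in\p$, the homogeneous ideal $J:=\gr_\m(\p)$ — which is nonzero and satisfies $\dim(\gr_\m(R)/J)=\dim(R/\p)\ge1$, hence is not $\m$-primary — is stable under these operators: $\Theta^{(n)}_u(h)\in J$ for every homogeneous $h\in J$ and every $u\in\cO_F$, and one may iterate, letting $n$ grow at each step. Here $\Theta^{(n)}_u=\sum_l x_l^{p^n}D^{(u)}_l$ with $D^{(u)}_l:=\sum_i\bar c_{il}(u)\partial_i$ a constant-coefficient operator; note that $\partial_j(x_l^{p^n})=0$, so the derivations only ever touch the original leading form, and that the family $\{D^{(u)}_l\}_{u,l}$ is precisely the regular representation of the finite $\F_p$-algebra $\cO_F/p\cO_F$ on itself.

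It then remains to prove the \emph{graded statement}: a nonzero homogeneous ideal $J\subsetneq E[x_1,\dots,x_d]$ that is stable in the above sense is $\m$-primary; this forces $\dim(R/\p)=0$, the desired contradiction. I would argue pointwise on $V(J)$. As $E$ is finite ($\bar E=\bFp$) and $\dim V(J)\ge 1$, there is a point $v\in V(J)$, $v\ne 0$; after enlarging $E$ we may assume $v\in E^d$, so $v^{(p^n)}=v$ whenever $[E:\F_p]\mid n$. Choose a nonzero homogeneous $h\in J$ whose leading term is not a $p$-th power — possible since, if $h$ is a $p$-th power, then $x_1h\in J$ is not. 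Applying iterated operators $\Theta^{(n_1)}_{u_1}\cdots\Theta^{(n_r)}_{u_r}$ to $h$ with the $n_t$ large multiples of $[E:\F_p]$ and evaluating at $v$, the vanishing relations $(\Theta^{(n_1)}_{u_1}\cdots\Theta^{(n_r)}_{u_r}h)(v)=0$ translate, using $v^{(p^{n_t})}=v$, into the vanishing of every $r$-th derivative of $h$ at $v$ contracted against the $r$-fold product of one fixed nonzero subspace of $\cO_F/p\cO_F\otimes E$ attached to $v$ (the Gorenstein self-duality of $\cO_F/p\cO_F$ and the cyclicity of its regular representation are used to identify the relevant span of the vectors $(\bar c_{il}(u))_i$ seen through $v$). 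Forcing this for all $r$ should give $v=0$, a contradiction.

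I expect this last step to be the main obstacle. The delicate points are the degeneration of the operators $\Theta^{(n)}_u$ on inseparable leading forms (where all derivatives vanish — handled by the multiplication-by-$x_j$ trick, which may have to be iterated or replaced by a more careful choice, and perhaps by treating $F/\Q_p$ unramified first and reducing the general case to it) and the passage from the stability of $J$ under the regular representation of $\cO_F/p\cO_F$ to $V(J)=\{0\}$. This is precisely where the \emph{multiplicative}, not merely the additive, structure of $\cO_F$ intervenes, which is why the torus action in the hypothesis is essential: without it $E[[\cO_F]]$ carries many non-open ideals as soon as $F\ne\Q_p$.
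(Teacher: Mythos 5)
Your reduction to prime ideals and your Taylor-expansion congruence are sound and parallel the paper (its lemme \ref{lem-reduction} and proposition \ref{prop-d(x)}), and passing to $J=\gr(\p)$ stable under the operators $\Theta^{(n)}_u=\sum_{i,l}\overline{c}_{il}(u)\,x_l^{p^n}\partial_i$ is legitimate. But everything then hinges on your ``graded statement'', which you only sketch, and the route you propose for it cannot work: by restricting to exponents $n$ divisible by $[E:\F_p]$ (so that $v^{(p^n)}=v$), you throw away exactly the information that makes the statement true. Concretely, take $F/\Q_p$ unramified, $\F_q\subseteq E$, and let $\varepsilon_\sigma=\sum_i e_i\otimes a_i\in\F_q\otimes_{\F_p}E$ be the idempotent attached to an embedding $\sigma:\F_q\to E$, so that $\sum_i\overline{c}_{il}(u)\,a_i=\sigma(u)\,a_l$ for all $u,l$, with $a_l\in\sigma(\F_q)$. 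Put $\ell=\sum_l a_l x_l$. For every $n$ divisible by $f$ one has $a_l^{p^n}=a_l$, hence for every polynomial $g$,
\[\Theta^{(n)}_u(\ell g)\equiv g\sum_l x_l^{p^n}\Bigl(\sum_i\overline{c}_{il}(u)a_i\Bigr)=\sigma(u)\,g\sum_l (a_lx_l)^{p^n}=\sigma(u)\,g\,\ell^{p^n}\equiv 0 \pmod{(\ell)},\]
so the height-one prime $(\ell)$, which is as far from $\m$-primary as possible, is stable under \emph{all} the operators you allow yourself (those with $[E:\F_p]\mid n$, hence $f\mid n$), including arbitrary iterates. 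Therefore no evaluation argument at a point $v\in V(J)$ using only those $n$ can reach a contradiction; the graded statement is only rescued by the operators with $n$ in the other residue classes modulo $f$ (the Frobenius-twisted ones), and your scheme, as set up, cannot see them. This is precisely the step you flag as ``the main obstacle'', and it is the actual content of the theorem. (Two lesser points: the paper takes $E$ to be an arbitrary field of characteristic $p$, so assuming $E$ finite and $v$ Frobenius-fixed needs justification; and your sketch does not engage with the ramified structure at all, whereas some choice like the paper's index $i_0$ is needed to control the terms coming from $Y_i$, $i\geq 1$.)

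For comparison, the paper never passes fully to the graded ideal: it retains the filtration data through the function $\delta(x,P)$ (Artin--Rees via Nagata), uses the index $i_0$ and conditions (a),(b) to render the contributions of $\rho(\overline{x})(X_{i,k})$, $i\geq1$, negligible up to an $\epsilon$-gain (proposition \ref{prop-d(x)}), and then separates the individual derivations by the Vandermonde/Cramer estimates of Ardakov--Wei--Zhang: multiplying by $U_g^{p^r}$ and letting $r\to\infty$ gives $\delta(U_g,P)=+\infty$ (corollaire \ref{cor-delta}), while $\delta(U_g)=0$ for a suitable $g$, whence $P\in\p$, then $\partial F/\partial X_{0,k}\in\p$ for all $k$ via the bijection $\overline{x}\mapsto g\circ\rho(\overline{x})$, then control of $\p$ by $\varpi U$ (lemme \ref{lem-controle}) and iteration over $\varpi^nU$ to force $\p=0$. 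The disentangling by $U_g$ and the control-and-descent step are exactly what your appeal to ``Gorenstein self-duality and cyclicity'' would have to replace, and nothing in your proposal currently does so; as it stands the proof is incomplete at its decisive point.
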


Les techniques que nous employons pour d\'emontrer ce r\'esultat sont classiques pour l'\'etude des alg\`ebres d'Iwasawa (\cite{AWZ}, \cite{AWZ2}, \cite{AWinv}). Le cas o\`u $F$ est non ramifi\'e sur $\Q_p$ peut m\^eme se traiter directement \`a partir du r\'esultat principal de \cite{AWinv}. N\'eanmoins le cas totalement ramifi\'e demande un traitement relativement diff\'erent. Nous avons donc d\^u d\'evisser diff\'eremment les arguments de \cite{AWinv} pour les int\'egrer \`a notre preuve et ainsi traiter de front les diff\'erents cas.\\

\emph{Remerciements : }Nous remercions Christophe Breuil pour avoir port\'e ce probl\`eme \`a notre attention, ainsi que pour plusieurs remarques sur une premi\`ere version de ce travail.\\

\emph{Notations : } Fixons  $p$ un nombre premier et $E$ un corps de caract\'eristique $p$.
Soient  $F$   une extension finie  de $\Q_p$ de degr\'e $n$, $\cO_F$ son anneau des entiers et $\F_q=\F_{p^f}$  son corps r\'esiduel.
Notons $e$ l'indice de ramification de $F$ sur $\Q_p$ de sorte que $n=ef$.

On note $G=\GL_2(F)$, et on d\'efinit deux sous-groupes de $G$ par \[U= \matr1{\cO_F}01\ \  \mathrm{et}\ \  \Gamma=\matr{1+p\cO_F}001.\]
Le groupe $U$ est uniforme, et $\Gamma$ l'est si $p \geq 3$. Si $p=2$, $\Gamma^2$ est uniforme. Le groupe $\Gamma$ est un sous-groupe du normalisateur de $U$ dans $G$, il agit donc contin\^ument sur $U$ par conjugaison. Notons $E[[U]]$ l'alg\`ebre d'Iwasawa de $U$ \`a coefficients dans $E$:
\[ E[[U]]:=\plim_{U'} E[U/U']\]
o\`u $U'$ parcourt les sous-groupes ouverts de $U$. Il s'agit d'un anneau local complet et noeth\'erien, dont l'id\'eal maximal est engendr\'e par les \'el\'ements $u-1$ pour $u\in U$ (cf. \cite[\S$7.4$]{DDMS}).

Comme $U/U^p \simeq \F_q[X]/(X^e)$, le choix d'une uniformisante $\varpi\in \cO_F$ et d'un \'el\'ement primitif $\lambda$ de $\F_q$ sur $\F_p$ nous donne un isomorphisme d'anneaux locaux complets
\begin{eqnarray}
\label{equation-X_ik}
E[[X_{i,k}; 0\leq i\leq e-1, 0\leq k\leq f-1]]\stackrel{\sim}{\longrightarrow} E[[U]]&&
\end{eqnarray}
au moyen de l'identification
\begin{eqnarray*}
X_{i,k}\longmapsto \matr1{\varpi^i[\lambda^k]}01-1
\end{eqnarray*}
pour tout $0\leq i\leq e-1$, $0\leq k\leq f-1$, o\`u $[\lambda^k]\in\cO_F$ d\'esigne le repr\'esentant multiplicatif de $\lambda^k$.

Si $\a$ et $\b$ sont deux id\'eaux d'un anneau commutatif $A$, on note $(\a:\b)$ l'id\'eal des $x \in A$ tels que $x \b \subset \a$.

Si $V$ est un $\F_p$-espace vectoriel on d\'esigne par $\bP(V)$ le projectivis\'e de $V$, c'est-\`a-dire l'ensemble des sous-espaces de dimension $1$ de $V$.
Si $S \subset \bP(V)$, on note alors $\prod_{l \in S} l$ le produit $\prod_{l \in S} w_l \in \mathrm{Sym}(V)$ o\`u $w_l$ est un g\'en\'erateur de la droite $l$ pour tout $l\in S$. Il s'agit d'un \'el\'ement bien d\'efini de $\bP(\mathrm{Sym}(V))$.

Dans toute la suite, lorsque $W$ est un $\F_p$-espace vectoriel, on note $W^*$ l'espace des formes $\F_p$-lin\'eaires de $W$ dans $\F_p$.

\section{Un peu d'alg\`ebre commutative}

Cette partie contient quelques pr\'eliminaires techniques \`a la preuve du th\'eor\`eme principal.

\subsection{La fonction $\delta$}
\label{sec:delta}
Soient $A=E[[X_1,...,X_n]]$ l'anneau  des s\'eries formelles en $n$ variables \`a coefficients dans $E$ et $\m$ son id\'eal maximal. L'anneau $A$ est complet pour la topologie $\m$-adique. On note $\deg$ la fonction degr\'e  associ\'ee \`a la filtration $\m$-adique. Plus pr\'ecis\'ement, pour $x\in A$, $\deg(x)$ est le plus grand entier $k$ tel que $x\in\m^{k}$. Par convention, on pose $\deg(0)=+\infty$. La fonction $\deg$ d\'efinit une valuation sur $A$, autrement dit, on a $\deg(x+y)\geq \min\{\deg(x),\deg(y)\}$ et $\deg(xy)=\deg(x)+\deg(y)$ pour $x,y\in A$. Si $r \in [0,+\infty[$, on notera dans la suite $\m^r$ l'ensemble des $x \in A$ tels que $\deg(x) \geq r$.

Soit $\p$ un id\'eal premier de $A$. Pour $x\in A$, on note $\nu(x)$ le degr\'e de l'image de $x$ dans $A/ \p$ pour la topologie $\m$-adique sur $A/ \p$. Il s'agit du plus grand entier $k$ tel que $x\in \p+\m^k$ avec la convention $\nu(x)=+\infty$ si $x\in \bigcap_{k\geq 1}(\p+\m^k)$. Comme $A$ est noeth\'erien et complet pour la topologie $\m$-adique, tout ses id\'eaux sont ferm\'es dans $A$, donc $\p=\bigcap_{k\geq 1}(\p+\m^k)$. Ainsi $\nu(x)=+ \infty$ si et seulement si $x \in \p$. On a toujours $\nu(x)\geq \deg(x)$, mais $\nu$ n'est pas n\'ecessairement une valuation. On a seulement la propri\'et\'e plus faible $\nu(xy)\geq \nu(x)+\nu(y)$ si $x,y\in A$.

\begin{defn}
Pour $x, P\in A$, on d\'efinit:
\[\delta(x,P):=\sup_{k\geq 0}\bigl\{\nu(x^kP)-\deg(x^kP)\bigr\}\in \N\cup\{+\infty\}\]
avec la convention $\delta(x,P)=+\infty$ si $x\in\p$ ou $P\in\p$.  On \'ecrira $\delta(x)$ au lieu de $\delta(x,1)$.
\end{defn}

Comme
\[ \nu(x^{k+1}P)-\deg(x^{k+1}P) \geq \nu(x^k P)-\deg(x^k P)+\nu(x)- \deg(x), \]
la fonction $k \mapsto \nu(x^k P)-\deg(x^k P)$ est croissante, et donc
\[\delta(x,P)=\underset{k\rightarrow \infty}{\lim}\nu(x^kP)-\deg(x^kP)\]
pour tout $x, P\in A$.

\vspace{2mm}

Soit $\gr(A)=\bigoplus_{k\geq 0}\m^k/\m^{k+1}$ l'anneau gradu\'e associ\'e \`a $A$. Il est canoniquement isomorphe \`a l'anneau des polyn\^omes $E[X_1,...,X_n]$. 
Pour $x\in A$ non nul, on note $\gr(x):=x \mod \m^{\deg(x)+1}\in \gr(A)$ le symbole de $x$. On note $\gr(\p)$ l'id\'eal gradu\'e associ\'e \`a $\p$, c'est-\`a-dire,
\[ \gr(\p)=\{ \gr(x),Â \, x \in \p \}.\]
C'est un id\'eal gradu\'e de $\gr(A)$ et
\[\gr(\p):=\bigoplus_{k\geq 0}(\p\cap \m^k)/(\p \cap \m^{k+1})\subseteq \gr(A)\]
Le lemme suivant r\'esume quelques propri\'et\'es de la fonction $\delta$.

\begin{lem}\label{lem-delta}
Soit $x\in A$ non nul.
\begin{enumerate}
\item[(i)] La fonction $\delta$ ne prend que les valeurs $0$ et $+ \infty$ et $\delta(x)=+\infty \Leftrightarrow \gr(x)\in \sqrt{\gr(\p)}$.

\item[(ii)] On a $\delta(x)=0$ (resp. $=+\infty$) si et seulement si pour tout $P\notin \p$, $\delta(x,P)<+\infty$ (resp. $=+\infty$).

\item[(iii)] Supposons $x\neq 0$ et $\delta(x)=+\infty$. Il existe  $\epsilon>0$ et $k_0 \geq 0$ tels que pour $k\geq k_0$,
\[\nu(x^k)\geq (1+\epsilon) \deg(x^k).\]
\end{enumerate}
\end{lem}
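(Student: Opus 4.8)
The plan is to prove (i) first, which immediately gives the dichotomy $\delta\in\{0,+\infty\}$ and the criterion, then deduce (iii) by a subadditivity argument, and finally (ii) by passing to the associated graded ring together with an Artin--Rees estimate. The criterion in (i) rests on the elementary observation that for nonzero $z\in A$ one has $\nu(z)>\deg(z)$ if and only if $\gr(z)\in\gr(\p)$: indeed if $z=q+r$ with $q\in\p$ and $\deg(r)\geq\deg(z)+1$ then $\gr(z)=\gr(q)\in\gr(\p)$, while conversely any element of the degree-$\deg(z)$ part of $\gr(\p)$ lifts to some $q\in\p\cap\m^{\deg(z)}$ with $z-q\in\m^{\deg(z)+1}$, so $z\in\p+\m^{\deg(z)+1}$; equivalently $\nu(z)=\deg(z)\iff\gr(z)\notin\gr(\p)$. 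Since $\gr(A)=E[X_1,\dots,X_n]$ is a domain, $\gr$ is multiplicative, so $\gr(x^k)=\gr(x)^k$. If $\gr(x)\in\sqrt{\gr(\p)}$, pick $m$ with $\gr(x)^m\in\gr(\p)$; then $\nu(x^m)\geq\deg(x^m)+1$, hence $\nu(x^{mj})\geq j\,\nu(x^m)\geq\deg(x^{mj})+j$, so $\delta(x)=+\infty$. If $\gr(x)\notin\sqrt{\gr(\p)}$, then $\gr(x)^k\notin\sqrt{\gr(\p)}\supseteq\gr(\p)$ for all $k$, so $\nu(x^k)=\deg(x^k)$ and $\delta(x)=0$. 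This proves (i), and applied to $x^kP$ in place of $x$ it also records that $\nu(x^kP)>\deg(x^kP)$ iff $\gr(x)^k\gr(P)\in\gr(\p)$.

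For (iii) one may assume $x\notin\p$ (otherwise $\nu(x^k)=+\infty$ and there is nothing to prove). Then $a_k:=\nu(x^k)$ is finite and superadditive, $a_{k+l}\geq a_k+a_l$, so by Fekete's lemma $a_k/k$ converges to $c:=\sup_k a_k/k\in(0,+\infty]$. Since $\delta(x)=\sup_k\bigl(a_k-k\deg(x)\bigr)=+\infty$ there is $k_1$ with $a_{k_1}>k_1\deg(x)$, whence $c>\deg(x)$; moreover $x$ is not a unit (a unit has $\nu(x^k)=0$), so $\deg(x)\geq1$. Choosing $\epsilon\in(0,1]$ with $(1+\epsilon)\deg(x)<c$ and then $k_0$ with $a_k/k>(1+\epsilon)\deg(x)$ for $k\geq k_0$ gives $\nu(x^k)\geq(1+\epsilon)\deg(x^k)$, as wanted.

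For (ii), one implication in each equivalence is immediate by taking $P=1\notin\p$, using (i); and if $\delta(x)=+\infty$, picking $m$ with $\gr(x)^m\in\gr(\p)$ gives, for any $P\notin\p$, $\nu(x^{mj}P)\geq j\,\nu(x^m)+\nu(P)\geq\deg(x^{mj}P)+j$, so $\delta(x,P)=+\infty$. The substance is to show $\delta(x)=0\Rightarrow\delta(x,P)<+\infty$ for every $P\notin\p$. I would pass to the Noetherian local domain $R:=A/\p$ with maximal ideal $\mathfrak n:=\m/\p$, so that $\nu$ becomes the $\mathfrak n$-adic order $\nu_R$ on $R$, $\gr_{\mathfrak n}(R)=\gr(A)/\gr(\p)$, and the hypothesis $\delta(x)=0$ means the image $\bar x\in R$ satisfies $\nu_R(\bar x^k)=k\deg(x)$ for all $k$ (i.e. $\gr_{\mathfrak n}(\bar x)$ is not nilpotent). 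In the extended Rees ring $\mathcal S:=\bigoplus_{i\in\Z}\mathfrak n^iT^i\subset R[T,T^{-1}]$ — Noetherian, a domain, with $T^{-1}$ a nonzerodivisor — set $\alpha:=\bar xT^{\deg(x)}$ and $\beta:=\bar PT^{\nu(P)}$. Unwinding the definitions, $\nu(x^kP)-\deg(x^kP)$ equals the $(T^{-1})$-adic order of $\alpha^k\beta$ in $\mathcal S$ plus the constant $\nu(P)-\deg(P)$, while the $(T^{-1})$-adic order of $\alpha^k$ is $0$ for every $k$ by non-nilpotency. Applying Artin--Rees to the ideal $(T^{-1})\mathcal S$ and the ideal $\mathcal S\beta$ yields $c_0$ with $(T^{-1})^{\ell}\mathcal S\cap\mathcal S\beta=(T^{-1})^{\ell-c_0}\bigl((T^{-1})^{c_0}\mathcal S\cap\mathcal S\beta\bigr)$ for $\ell\geq c_0$; were $\alpha^k\beta\in(T^{-1})^{c_0+1}\mathcal S$, this identity together with cancellation of $\beta$ in the domain $\mathcal S$ would force $\alpha^k\in(T^{-1})\mathcal S$, a contradiction. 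Hence the $(T^{-1})$-adic order of $\alpha^k\beta$ is at most $c_0$, so $\delta(x,P)\leq c_0+\nu(P)-\deg(P)<+\infty$. The only delicate point is this passage to $R$ and $\mathcal S$ and the uniform bound coming from Artin--Rees; everything else is bookkeeping with the valuation $\deg$ and the multiplicativity of $\gr$.
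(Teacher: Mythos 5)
Your proof is correct, and for parts (i) and (iii) it follows the paper's line almost verbatim: (i) is the same reduction to the multiplicativity of $\gr$ in the domain $\gr(A)$ together with the observation that $\nu(z)>\deg(z)$ iff $\gr(z)\in\gr(\p)$, and (iii) extracts the same linear gain from a witness $\nu(x^{k_1})>\deg(x^{k_1})$, with Fekete's lemma for the superadditive sequence $\nu(x^k)$ replacing the paper's explicit estimate $\nu(x^k)\geq\lfloor k/k_0\rfloor+\deg(x^k)$ and its choice $\epsilon=\tfrac{1}{2k_0\deg(x)}$ --- a cosmetic difference. The only genuine divergence is in the hard direction of (ii), $\delta(x)=0\Rightarrow\delta(x,P)<+\infty$ for $P\notin\p$. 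The paper gets the uniform bound by quoting Nagata's corollaire 3.14 (an Artin--Rees statement for colon ideals), which gives $((\p+\m^{k}):P)\subseteq(\p:P)+\m^{k-k_0}=\p+\m^{k-k_0}$ directly in $A$, whence $\nu(x^kP)-\deg(x^kP)\leq k_0-\deg(P)$ using $\nu(x^k)=\deg(x^k)$. You instead pass to the domain $R=A/\p$ and its extended Rees algebra $\mathcal S$, encode $\nu(x^kP)-\deg(x^kP)$ as a $(T^{-1})$-adic order, and apply the usual Artin--Rees lemma to $(T^{-1})$ and the principal ideal $(\beta)$, using cancellation of $\beta$ in the domain $\mathcal S$ to cap that order by a constant $c_0$ depending only on $P$. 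This is the same Artin--Rees uniformity in different clothing; what your version buys is self-containedness (no appeal to the specific colon-ideal formulation in Nagata) at the cost of introducing the Rees-algebra bookkeeping, while the paper's citation keeps the argument to three lines entirely inside $A$. All the auxiliary identifications you use ($\nu=$ the $\mathfrak n$-adic order on $R$, the order of $zT^d$ being $\nu_R(z)-d$, finiteness of $\nu(P)$ for $P\notin\p$, and $(\p:P)=\p$ by primality) are correct, so there is no gap.
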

\begin{proof}
(i) La condition $\gr(x)\in\sqrt{\gr(\p)}$ est \'equivalente    \`a l'existence de $k_0\geq 1$ tel que
\[\nu(x^{k_0})\geq \deg(x^{k_0})+1.\]
Par cons\'equent, si $\gr(x)\in\sqrt{\gr(\p)}$, alors
\[\nu(x^{m{k_0}})\geq m\nu(x^{k_0})\geq m+ \deg(x^{mk_0}), \]
d'o\`u $\delta(x)=+\infty$.  D'autre part, si $\gr(x)\notin\sqrt{\gr(\p)}$, alors $\nu(x^k)=\deg(x^k)$ pour tout $k\geq 1$, donc $\nu(x)=0$.

(ii) Montrons tout d'abord que $\delta(x)=+\infty$ si et seulement si $\delta(x,P)=+\infty$ pour tout $P \in A$. En prenant $P=1$, on voit que la condition est suffisante. De plus, comme $\nu(x^kP)\geq \nu(x^k)+\deg(P)$, on a bien $\delta(x,P)=+\infty$ si $\delta(x)=+\infty$, d'o\`u la n\'ecessit\'e.

Supposons maintenant $\delta(x)=0$. D'apr\`es \cite[corollaire 3.14]{Na} (qui est une cons\'equence du lemme d'Artin-Rees), pour $P \notin \p$, il existe un entier $k_0\geq 0$ tel que:
\[((\p+\m^{k}):P)\subseteq (\p:P)+\m^{k-k_0}=\p+\m^{k-k_0},\ \ \forall k\geq k_0 \]
o\`u l'\'egalit\'e vient de l'hypoth\`ese $P\notin\p$.
On en d\'eduit que pour $k$ assez grand (tel que $\nu(x^kP)\geq k_0$):
\[\nu(x^k)\geq \nu(x^kP)-k_0,\]
puis
\[\nu(x^kP)-\deg(x^kP) \leq (\nu(x^k)-\deg(x^k))+(k_0-\deg(P))=k_0-\deg(P).\] D'o\`u le r\'esultat. La suffisance s'obtient en consid\'erant le cas o\`u $P=1$.

(iii)
La preuve de (i) nous donne un entier $k_0\geq 1$ tel que $\nu(x^{k_0})\geq 1+\deg(x^{k_0})$. Notons que la condition $\delta(x)=+\infty$ entra\^ine $\deg(x)\neq0$.
Pour $k\geq k_0$ avec $m=\lfloor k/k_0\rfloor$ la partie enti\`ere de $k/k_0$, on a 
\[\nu(x^k)\geq m+\deg(x^k)\geq \deg(x^{k})\bigl(1+\frac{m}{k\deg(x)}\bigr)\geq \deg(x^k)\bigl(1+\frac{1}{2k_0\deg(x)}\bigr).
\]
On prend donc $\epsilon= \frac{1}{2k_0\deg(x)}$.
 \end{proof}

\subsection{Le contr\^oleur d'un id\'eal}

Rappelons que $U=\smatr1{\cO_F}01$ et $E[[U]]$  est l'alg\`ebre d'Iwasawa de $U$. Soient $I$ un id\'eal de $E[[U]]$ et $V$ un sous-groupe ferm\'e de $U$. On dit que $I$ est contr\^ol\'e par $V$, ou que $V$ contr\^ole $I$, si $I$ est engendr\'e topologiquement par un sous-ensemble de $E[[V]]$ ou, de mani\`ere \'equivalente, si
\[I=\overline{(I\cap E[[V]])\cdot E[[U]]}.\]

Rappelons un crit\`ere pour que $I$ soit control\'e par le sous-groupe ouvert $\varpi U$.
\begin{lem}\label{lem-controle}
L'id\'eal $I$ est contr\^ol\'e par $\varpi U$ si et seulement si $I$ est stable par les d\'erivations $\frac{\partial}{\partial X_{0,k}}$ pour $0\leq k\leq f-1$.
\end{lem}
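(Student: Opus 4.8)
The statement links two notions: control of $I$ by the open subgroup $\varpi U$, and stability of $I$ under the derivations $\frac{\partial}{\partial X_{0,k}}$. The point is that $\varpi U$ corresponds, under the isomorphism \eqref{equation-X_ik}, to the closed subalgebra $E[[X_{i,k}\,;\,1\le i\le e-1,\,0\le k\le f-1]]$ obtained by deleting the ``bottom layer'' variables $X_{0,0},\dots,X_{0,f-1}$. Indeed $\varpi U=\smatr1{\varpi\cO_F}01$, so the multiplicative representatives $\varpi^i[\lambda^k]$ with $1\le i\le e-1$ generate $\varpi U/(\varpi U)^p$; set $B:=E[[X_{i,k}\,;\,1\le i\le e-1]]\subset A:=E[[U]]$, so that $A$ is the $X_{0,\bullet}$-power series ring over $B$. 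One should first make this identification precise, and record that $A=B[[X_{0,0},\dots,X_{0,f-1}]]$, so every $g\in A$ has a unique expansion $g=\sum_{\alpha\in\N^f} c_\alpha X_0^\alpha$ with $c_\alpha\in B$ (multi-index notation on the variables $X_{0,0},\dots,X_{0,f-1}$), and $\frac{\partial}{\partial X_{0,k}}$ is the usual formal partial derivative, which preserves $A$ and is $B$-linear.

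Next I would establish the two implications. Control by $\varpi U$ means $I=\overline{(I\cap B)A}$, i.e. $I$ is topologically generated by elements of $B$. If this holds, then any element of $I$ is a convergent $A$-combination of elements of $B$; since each $\frac{\partial}{\partial X_{0,k}}$ is continuous, $B$-linear and kills $B$ (in characteristic $p$ one must be mildly careful: $\frac{\partial}{\partial X_{0,k}}$ kills $B$ because no $X_{0,j}$ appears, not because of any $p$-th power phenomenon), it follows that $\frac{\partial}{\partial X_{0,k}}$ maps $(I\cap B)A$ into $I$ — by the Leibniz rule $\frac{\partial}{\partial X_{0,k}}(b\cdot a)=b\cdot\frac{\partial a}{\partial X_{0,k}}\in I$ for $b\in I\cap B$, $a\in A$ — and then by continuity into $\overline{I}=I$. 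This gives stability.

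Conversely, suppose $I$ is stable under all $\frac{\partial}{\partial X_{0,k}}$. The plan is to show every $g\in I$ lies in $\overline{(I\cap B)A}$ by extracting its ``coefficients'' $c_\alpha\in B$ and showing each $c_\alpha\in I$. The key mechanism is that, over a field of characteristic $p$, applying the divided-power / iterated partial derivative operator $\prod_k \frac{1}{\alpha_k!}\,\partial^{\alpha_k}/\partial X_{0,k}^{\alpha_k}$ does not literally work, so instead one proceeds layer by layer modulo $p$-th powers: writing $g=\sum_{0\le\alpha_k\le p-1}c_\alpha X_0^\alpha + (\text{terms involving some }X_{0,k}^p)$ and using that $X_{0,k}^p=(u_{k}-1)^p$ relates to the element $u_k^p-1$ which lies in $E[[U^p]]\subset B$ up to lower-order corrections — more cleanly, one iterates the statement for the subgroup $\varpi U$ inside $U^p\varpi U$, etc. The cleanest route, which I expect the authors take, is: first-order derivatives $\partial/\partial X_{0,k}$ let one strip off the degree-one part in each $X_{0,k}$; to handle higher powers $X_{0,k}^j$ with $1\le j\le p-1$ one applies $\partial/\partial X_{0,k}$ repeatedly ($j$ times) and uses $\partial^j X_{0,k}^j = j!\,$ which is a unit for $j\le p-1$; the powers $X_{0,k}^p$ are absorbed because $(1+X_{0,k})^p-1$ is, up to the $p$-th power Frobenius twist, an element of the Iwasawa algebra of a smaller group already contained in (a shift of) $B$. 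By a standard completeness/induction argument on the $\m$-adic order one then writes $g$ as a convergent sum of elements of $I\cap B$ times elements of $A$.

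\textbf{Main obstacle.} The essential difficulty is the characteristic-$p$ bookkeeping in the converse direction: the formal partial derivatives annihilate all $p$-th powers, so a naive coefficient-extraction argument only recovers the ``$X_0^\alpha$ with $0\le\alpha_k\le p-1$'' part of $g$, and one must handle the remaining part — which involves $X_{0,k}^p=(u_k-1)^p$ — by relating it, via the group structure (the relation between $u_k^p-1$ and $(u_k-1)^p$ in $E[[U]]$, which holds exactly because we are in characteristic $p$), to the Iwasawa algebra of $\varpi U^p$ or of $\varpi U$ itself, and then iterating. Making this iteration converge $\m$-adically and checking that the limit lies in $\overline{(I\cap B)\cdot A}$ is where the real work lies; everything else (continuity, $B$-linearity, Leibniz) is routine.
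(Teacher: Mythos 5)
Your argument is built on a misidentification of $E[[\varpi U]]$, and this is fatal to both directions as you have set them up. The subgroup $\varpi U\simeq\varpi\cO_F$ is a free $\Z_p$-module of rank $ef$, not $(e-1)f$: besides the elements $\varpi^i[\lambda^k]$ with $1\le i\le e-1$, it contains $\varpi^e\cO_F=p\cO_F$. So your claim that the $\varpi^i[\lambda^k]$, $1\le i\le e-1$, generate $\varpi U/(\varpi U)^p$ fails by a dimension count, and $E[[\varpi U]]$ is strictly larger than your $B=E[[X_{i,k};\,1\le i\le e-1]]$: it is the closed subalgebra topologically generated by the $X_{i,k}$ with $i\ge 1$ \emph{together with} the $p$-th powers $X_{0,k}^p=(1+X_{0,k})^p-1$, which come from $pU\subset\varpi U$. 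In the unramified case $e=1$ your $B$ is just $E$, so ``contr\^ol\'e par $\varpi U=pU$'' would mean ``engendr\'e par des constantes'', which is clearly not the content of the lemma. Consequently the statement you actually set out to prove, namely that $I$ is stable under the $\partial/\partial X_{0,k}$ if and only if $I=\overline{(I\cap B)\,E[[U]]}$, is simply false: the ideal $I=(X_{0,0}^p)$ is stable under all these derivations and is controlled by $\varpi U$ (its generator lies in $E[[U^p]]\subset E[[\varpi U]]$), yet $I\cap B=0$, so $\overline{(I\cap B)\,E[[U]]}=0\neq I$.

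Even the easy direction, once corrected, relies exactly on the ``$p$-th power phenomenon'' that you explicitly set aside: the derivations annihilate the generators of the $p\cO_F$-layer of $E[[\varpi U]]$ precisely because these are $p$-th powers in characteristic $p$; then Leibniz, $E[[\varpi U]]$-linearity and continuity give stability. For the converse, your sketch stops at the real difficulty: differentiating only recovers the coefficients of the monomials $X_0^\alpha$ with $0\le\alpha_k\le p-1$, and the remaining part has to be compared with $E[[\varpi U]]$, not with ``a shift of $B$''. A correct direct proof would use that $E[[U]]$ is free of rank $p^f$ over $E[[\varpi U]]$ with basis these monomials, and extract the coefficients in this basis by downward induction using the derivations; this is essentially the content of \cite[proposition 2.4(d)]{AWZ2}, which is all the paper invokes for this lemma. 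As written, your proposal proves (at best) a different and false statement, and leaves the hard implication as an admitted sketch, so it does not establish the lemma.
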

\begin{proof}
C'est un cas particulier de \cite[proposition 2.4(d)]{AWZ2}.
\end{proof}

\subsection{Quelques r\'esultats sur les matrices de polyn\^omes}

Pour pouvoir appliquer le lemme \ref{lem-controle}, il nous faut pouvoir \'ecrire les d\'erivations $\frac{\partial}{\partial X_{0,k}}$ en fonctions des d\'erivations de $E[[U]]$ induites par l'action de $\Gamma$. C'est l'objet de cette partie. Plus pr\'ecis\'ement consid\'erons $V$ un $\F_p$-espace vectoriel de dimension finie. Les d\'erivations de l'alg\`ebre $\sym(V)$ s'identifient aux morphismes $\sym(V)$-lin\'eaires $\hom_{\sym(V)}(\sym(V) \otimes_{\F_p} V, \sym(V))$, c'est-\`a-dire aux applications $\F_p$-lin\'eaires de $V$ dans $\sym(V)$. Nous allons d\'esormais raisonner en termes d'applications $\F_p$-lin\'eaires. Soit $\phi$ l'inclusion canonique de $V$ dans $\sym(V)$. On note $\phi^{p^n}$ l'application $\F_p$-lin\'eaire de $V$ dans $\sym(V)$ obtenue en posant $\phi^{p^n}(v)=\phi(v)^{p^n}$ pour tout $v \in V$. La proposition $1.4$ de \cite{AWZ} montre que si $g$ est une forme lin\'eaire de $V$, alors, quitte \`a multiplier $g$ par un \'el\'ement homog\`ene bien choisi de $\sym(V)$, on peut \'ecrire $g$ comme une combinaison $\sym(V)$-lin\'eaires des applications $\phi^{p^n}$ pour $n_0 \leq n \leq n_0+\dim(V)-1$. Dans cette partie, nous reprenons la preuve de \cite[proposition 1.4]{AWZ} afin d'obtenir une borne inf\'erieure pour les degr\'es des coefficients de cette combinaison lin\'eaire. 

\begin{prop} \label{prop-Uf-pre}
Fixons $g\in V^{*}$. Soit $m=\dim V$. Alors pour $s\geq 0$, on a l'\'egalit\'e suivante dans $\hom_{\F_p}(V, \sym(V))$, o\`u $\m_V$ d\'esigne l'id\'eal maximal de $\sym(V)$ engendr\'e par $V$,
\[\Bigl(\prod_{w\in \bP(V)\backslash \bP(\ker g)}w\Bigr)^{p^s}\cdot  g \in \sum_{j=1}^m\m_V^{p^s(p^{m-1}-p^{j-1})}\phi^{p^{s+j-1}}.\]
\end{prop}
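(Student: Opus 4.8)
The statement is an explicit strengthening of \cite[proposition 1.4]{AWZ}: not only can $g$ (after multiplying by a suitable homogeneous element) be written as a $\sym(V)$-linear combination of the Frobenius twists $\phi^{p^{s+j-1}}$ for $1 \le j \le m$, but the coefficient of $\phi^{p^{s+j-1}}$ can be taken homogeneous of degree exactly $p^s(p^{m-1}-p^{j-1})$. I would prove this by induction on $m = \dim V$, following the scheme of \cite{AWZ} but keeping track of degrees at every step.

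The base case $m=1$ is immediate: $V = \ker g$ has no complement line outside $\bP(\ker g)$ when $\dim V =1$ and $g \ne 0$ forces $\ker g = 0$, so the product over $\bP(V)\setminus\bP(\ker g)$ is the single line $\bP(V)$ itself; writing $w$ for a generator, $g$ is a scalar multiple of the linear form $w$, so $w^{p^s}\cdot g$ is proportional to $\phi^{p^s}$, i.e. the asserted membership in $\m_V^{0}\,\phi^{p^s}$ (note $p^{m-1}-p^{j-1}=0$ when $m=j=1$). For the inductive step, I would pick a hyperplane $W \subset V$ with $g|_W \ne 0$ (possible since $g\ne 0$), so that $\dim W = m-1$ and the inductive hypothesis applies to $(W, g|_W)$. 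Choosing $v_0 \in V \setminus W$ and writing $V = W \oplus \F_p v_0$, one expresses $\phi$ on $V$ in terms of its restriction to $W$ and its value on $v_0$; the key algebraic identity used in \cite{AWZ} relates $g$, the product of the lines of $V$ lying outside $\ker g$, and the product of the lines of $W$ outside $\ker(g|_W)$, the extra factor being a product of the $p+1$ (or fewer) lines through $v_0$ and the points of $\bP(W)$ not killed by $g$ — or more precisely a Moore-type determinant identity expressing that $\phi(v_0)^{p^j}$ can be solved for in terms of the $\phi(w)^{p^i}$, $w \in W$. I would then substitute the inductive expression into this identity and read off the degree of each resulting coefficient.

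The main obstacle is the bookkeeping of degrees through this substitution, in particular controlling the degree of the Moore determinant and its minors. Concretely, the product $\prod_{w \in \bP(V)\setminus \bP(\ker g)} w$ differs from $\left(\prod_{w' \in \bP(W)\setminus \bP(\ker g|_W)} w'\right)$ by a homogeneous factor whose degree must be exactly the difference $\#(\bP(V)\setminus\bP(\ker g)) - \#(\bP(W)\setminus\bP(\ker g|_W))$, and one must check this matches the jump $p^s p^{m-1}$ versus $p^s p^{m-2}$ in the target degrees. The Vandermonde/Moore determinant $\det(\phi(v_i)^{p^{j-1}})$ over a basis $v_1,\dots,v_m$ is homogeneous of degree $1 + p + \cdots + p^{m-1}$, and its cofactors have degree $(1+\cdots+p^{m-1}) - p^{j-1}$; the factor $p^s$ is carried along because everything is raised to the $p^s$-th power at the end (or, equivalently, one proves the case $s=0$ first and then applies the $\F_p$-linear Frobenius twist $x \mapsto x^{p^s}$, which multiplies all degrees by $p^s$ and sends $\m_V^d$ into $\m_V^{p^s d}$). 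So the cleanest route is: (1) reduce to $s=0$ by a Frobenius-twist argument; (2) prove the $s=0$ statement by induction on $m$, at each step invoking the inductive hypothesis on a hyperplane $W$ and the explicit Moore-determinant identity; (3) verify the degree arithmetic $\big(p^{m-1}-p^{j-1}\big) = \big(p^{m-1}-p^{m-2}\big) + \big(p^{m-2}-p^{j-1}\big)$ lines up with the degree of the transition factor plus the inductive degrees. The delicate point is making sure the chosen hyperplane and basis can be arranged so that the product of lines appearing in \cite{AWZ}'s identity is \emph{exactly} the claimed homogeneous factor and not merely a divisor of it — i.e. that there is no degree loss — which is where one needs the precise combinatorial description of $\bP(V)\setminus\bP(\ker g)$ as a disjoint union of $\bP(W)\setminus\bP(\ker g|_W)$ and the lines meeting the affine chart $v_0 + W$.
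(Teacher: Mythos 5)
Your preliminary reductions are fine: the Frobenius-twist reduction to $s=0$ is legitimate (since $g$ is $\F_p$-valued, raising an identity $\bigl(\prod w\bigr)\cdot g=\sum_j c_j\phi^{p^{j-1}}$ to the $p^s$-th power gives coefficients $c_j^{p^s}$ of $p^s$ times the degree), the base case $m=1$ is correct, and your degree counts for the Moore determinant and its cofactors are the right ones. The gap is the inductive step, which is never actually performed and which, as sketched, does not go through. The inductive hypothesis is an identity in $\hom_{\F_p}(W,\sym(W))$, so it only controls $g|_W$ evaluated on $W$. To use it on $V=W\oplus\F_p v_0$ you must replace $\phi_W$ by $\phi_W\circ\pi$ ($\pi$ the projection along $v_0$), and $\phi_W\circ\pi=\phi_V-v_0\,\lambda$, where $\lambda\in V^*$ is the coordinate form with kernel $W$; since $\lambda$ is $\F_p$-valued, $(\phi_W\circ\pi)^{p^{j-1}}=\phi_V^{p^{j-1}}-v_0^{p^{j-1}}\lambda$. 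Substituting the inductive identity (composed with $\pi$, multiplied by the transition factor $\Delta_g/\Delta_{g|_W}$, which does have the expected degree $p^{m-1}-p^{m-2}$, so the ``good'' terms are of the right shape) therefore produces, besides terms in $\m_V^{p^{m-1}-p^{j-1}}\phi_V^{p^{j-1}}$, an error term which is a multiple of the linear form $\lambda$ (and a second one if $g(v_0)\neq 0$). Converting that error term into the required shape is precisely an instance of the proposition for the form $\lambda\in V^*$ on the $m$-dimensional space $V$ itself, and would moreover require the factor $\prod_{w\in\bP(V)\setminus\bP(W)}w$ of degree $p^{m-1}$, which is not available; so the induction as described is circular or incomplete. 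Your fallback formulation, a ``Moore-type determinant identity expressing that $\phi(v_0)^{p^j}$ can be solved for in terms of the $\phi(w)^{p^i}$, $w\in W$'', cannot be literally correct ($v_0^{p^j}\notin\sym(W)$), and once made precise it amounts to inverting the full $m\times m$ Moore matrix, at which point the induction buys nothing.

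That one-shot inversion is exactly the paper's proof, and it is shorter than the route you propose: complete $f_1=g$ to a basis of $V^*$, let $w_1,\dots,w_m$ be the dual basis, so that $\phi^{p^{s+j-1}}=\sum_i w_i^{p^{s+j-1}}f_i$ for $1\leq j\leq m$; this is a linear system with matrix $M(w_1^{p^s},\dots,w_m^{p^s})$, inverted by Cramer's rule, the adjugate being normalized row by row by the sub-Moore determinants $\det M(w_1^{p^s},\dots,\hat{w}_j^{p^s},\dots,w_m^{p^s})$ using \cite[lemma 1.1(2)]{AWZ}, which identifies $\Delta_j^{p^s}$ with the ratio of Moore determinants up to a scalar; the entries of the normalized adjugate lie in $\m_V^{p^s(p^{m-1}-p^{i-1})}$ by the homogeneity count of Lemma \ref{lem-estimation}, and reading off the first row gives the statement directly, with no induction and no separate treatment of $s$. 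To salvage your write-up, drop the hyperplane induction and carry out this Cramer argument with the degree bookkeeping you already have.
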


La preuve de la proposition \ref{prop-Uf-pre} consiste essentiellement \`a combiner la r\`egle de Cramer \`a des estimations du degr\'e des mineurs d'une matrice de Vandermonde. Les notations suivantes sont issues de \cite[\S1]{AWZ}.

\vspace{2mm}

Soit $\{w_1,...,w_m\}$ une base de $V$ sur $\F_p$.  On note $M(w_1,...,w_m)\in M_{m}(\sym(V))$ la matrice de type Vandermonde:
\begin{equation}\label{equation-matrix-Vander}M(w_1,...,w_m)=\left(\begin{array}{cccc}w_1&w_2&\cdots&w_m\\
w_1^p&w_2^p&\cdots&w_m^p\\
\vdots&\vdots&\ddots&\vdots\\
w_1^{p^{m-1}}&w_2^{p^{m-1}}&\cdots&w_m^{p^{m-1}}\end{array}\right)\end{equation}
Soit $\mathrm{Com}(M(w_1,...,w_m))$ sa comatrice, dont le $(i,j)$-terme est \'egal \`a $(-1)^{i+j}\det C_{ji}$, $C_{ij}$ \'etant le bloc de $M(w_1,...,w_m)$ obtenu en enlevant la $i$-\`eme ligne et la $j$-\`eme colonne. La r\`egle de Cramer s'\'ecrit \[M(w_1,...,w_m)\cdot \mathrm{Com}(M(w_1,...,w_m))=\det M(w_1,..,w_m)\cdot \mathrm{Id}_m.\]

\vspace{2mm}

Pour $j\in\{1,...,m\}$, posons $W_j\subset V$ le sous-$\F_p$-espace vectoriel engendr\'e par $\{w_i: i\neq j\}$ et   d\'efinissons les matrices
\[M(w_1,...,\hat{w}_j,...,w_m):=M(w_1,...,w_{j-1},w_{j+1},...,w_m).\]
Il est prouv\'e dans \cite[proposition 1.2]{AWZ} que $\det M(w_1,...,\hat{w}_j,...,w_m)$ divise tous les $\det C_{ij}$ pour $1\leq i\leq m$ et que $\det(M(w_1, \dots, w_m))$ est un polyn\^ome homog\`ene de degr\'e $|\mathbb{P}(\F_p^m)|$ (\cite[Lemma $1.1$]{AWZ}).
\begin{lem}\label{lem-estimation}
Pour $1\leq i\leq m$, on a
\[\frac{\det C_{ij}}{\det M(w_1,...,\hat{w}_j,...,w_m)}\in \m_V^{p^{m-1}-p^{i-1}}.\]
\end{lem}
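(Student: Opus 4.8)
The plan is to work with the explicit structure of the Vandermonde-type matrix $M(w_1,\dots,w_m)$ and to exploit the fact, already recalled from \cite[proposition 1.2]{AWZ}, that $\det M(w_1,\dots,\hat{w}_j,\dots,w_m)$ divides each minor $\det C_{ij}$. So the quotient $\det C_{ij}/\det M(w_1,\dots,\hat{w}_j,\dots,w_m)$ is a well-defined homogeneous element of $\sym(V)$, and it suffices to estimate its degree from below: showing that its degree is at least $p^{m-1}-p^{i-1}$ is exactly the assertion that it lies in $\m_V^{p^{m-1}-p^{i-1}}$, since $\sym(V)$ is a polynomial ring and $\m_V^r$ is spanned by monomials of degree $\geq r$.

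First I would compute the two relevant degrees. By \cite[Lemma 1.1]{AWZ} the full determinant $\det M(w_1,\dots,w_m)$ is homogeneous of degree $|\bP(\F_p^m)| = (p^m-1)/(p-1) = 1 + p + \cdots + p^{m-1}$, and likewise $\det M(w_1,\dots,\hat{w}_j,\dots,w_m)$, being a Vandermonde determinant in the $m-1$ variables $\{w_i : i \neq j\}$, is homogeneous of degree $|\bP(\F_p^{m-1})| = 1 + p + \cdots + p^{m-2}$. Next, $C_{ij}$ is the $(m-1)\times(m-1)$ submatrix of $M(w_1,\dots,w_m)$ obtained by deleting row $i$ and column $j$; its rows are indexed by the exponents $\{p^{k-1} : 1 \leq k \leq m,\ k \neq i\}$ and its columns by $\{w_l : l \neq j\}$. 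Expanding $\det C_{ij}$ as a sum over permutations, every term is a product $\prod_{l \neq j} w_l^{p^{k(l)-1}}$ where $k$ ranges over the $m-1$ exponent-indices $\{1,\dots,m\}\setminus\{i\}$; hence $\det C_{ij}$ is homogeneous of degree $\sum_{k \neq i} p^{k-1} = (1 + p + \cdots + p^{m-1}) - p^{i-1}$.

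The degree of the quotient is therefore exactly
\[
\Bigl(\sum_{k=1}^{m} p^{k-1} - p^{i-1}\Bigr) - \Bigl(\sum_{k=1}^{m-1} p^{k-1}\Bigr) = p^{m-1} - p^{i-1},
\]
which is precisely the exponent in the statement. Since the quotient is a homogeneous polynomial of this degree (and a polynomial, because the divisibility is already known), it lies in $\m_V^{p^{m-1}-p^{i-1}}$, as claimed. The only genuine input one must not skip is the divisibility $\det M(w_1,\dots,\hat{w}_j,\dots,w_m) \mid \det C_{ij}$ from \cite[proposition 1.2]{AWZ}; granting that, the argument is a pure bookkeeping of degrees of homogeneous polynomials, so I expect no real obstacle — the main point to get right is the indexing of which rows and columns survive in $C_{ij}$ so that the degree count $\sum_{k\neq i} p^{k-1}$ is correct.
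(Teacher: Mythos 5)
Your argument is correct and is essentially the proof the paper gives: both rely on the divisibility $\det M(w_1,\dots,\hat{w}_j,\dots,w_m)\mid\det C_{ij}$ from \cite[proposition 1.2]{AWZ} together with the degree count $\deg\det C_{ij}=(1+p+\cdots+p^{m-1})-p^{i-1}$ versus $\deg\det M(w_1,\dots,\hat{w}_j,\dots,w_m)=1+p+\cdots+p^{m-2}$, so the quotient is homogeneous of degree $p^{m-1}-p^{i-1}$ and hence lies in $\m_V^{p^{m-1}-p^{i-1}}$. Your explicit permutation-expansion justification of the degree of $\det C_{ij}$ just makes precise what the paper leaves as a remark.
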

\begin{proof}
Il suffit de remarquer que $\det C_{ij}$ est un polyn\^ome de degr\'e sup\'erieur ou \'egal \`a
\[(1+p+\cdots+p^{m-1})-p^{i-1},\]
et $\det M(w_1,...,\hat{w}_j,...,w_m)$ est homog\`ene de degr\'e $1+p+\cdots+p^{m-2}$.
\end{proof}

\begin{proof}[D\'emonstration de la proposition \ref{prop-Uf-pre}]
On peut supposer que $g \neq 0$, car sinon l'\'enonc\'e est \'evident. Posons $f_1:=g$ et compl\'etons  $\{f_1\}$ en une base
$\{f_1,\dots,f_m\}$ de $V^{*}$. Soit $\{w_1,\dots,w_{m}\}\subset V$ la base duale, de telle sorte que $\phi=\sum_{j=1}^m w_j  f_j$.
Par construction on a alors
\begin{eqnarray}
\label{equa-phi}
\phi^{p^r}=\sum_{j=1}^mw_j^{p^r}f_j
\end{eqnarray}
pour tout entier $r\geq 0$. Soient $\mathbf{e},\mathbf{f}\in \mathrm{Hom}_{\F_p}(V, \sym(V))^{m}$ les vecteurs colonnes d\'efinis par 
\[\mathbf{e}=\left(\begin{array}{c }\phi^{p^s}  \\ \vdots   \\ \phi^{p^{s+m-1}} \end{array}\right),\ \ \mathbf{f}=\left(\begin{array}{c }f_1  \\ \vdots   \\ f_m \end{array}\right).\]
Les \'equations (\ref{equa-phi}) pour $r=s,\,s+1,\dots,s+m-1$ s'\'ecrivent matriciellement 
\begin{eqnarray}
\label{equa-comp}
M(w_1^{p^s},\dots,w_m^{p^s})\cdot\mathbf{f}=\mathbf{e}.
\end{eqnarray}

Posons maintenant $\Delta_j= \prod_{w\in \bP(V)\backslash \bP(\ker(f_j))} w$. Par \cite[lemma 1.1(2)]{AWZ}, on a
\[ \Delta_j^{p^s}=\lambda_j\cdot \frac{\det M(w_1^{p^s},\dots,w_m^{p^s})}{\det M(w_1^{p^s},\dots,\hat{w}_j^{p^s},\dots, w_m^{p^s})} \]
avec $\lambda_j\in \F_p^{\times}$. En d\'esignant par $H$  la matrice diagonale dont le $(j,j)$-\`eme terme est  $\lambda_j^{-1}\cdot\det M(w_1^{p^s},\dots,\hat{w}^{p^s}_j,\dots, w_m^{p^s})$ et par  $D$ la matrice diagonale dont le
$(j,j)$-\`eme terme est $\Delta_j$, on trouve:
\begin{eqnarray}
\label{equa-2}
\underbrace{H^{-1}\cdot\mathrm{Com}(M(w_1^{p^s},...,w_m^{p^s}))}_{\defeq U}\cdot M(w_1^{p^s},...,w_m^{p^s})= D^{p^s}
\end{eqnarray} 
et,  d'apr\`es le lemme \ref{lem-estimation}, $U$ est une matrice dont le $(j,i)$-\`eme terme est un \'el\'ement de $\m_V^{p^s(p^{m-1}-p^{i-1})}$.

La premi\`ere ligne de l'\'egalit\'e de matrices $U\cdot \mathbf{e}=D^{p^s}\cdot\mathbf{f}$, d\'eduite de (\ref{equa-comp}) et (\ref{equa-2}), nous permet donc de conclure.
\end{proof}

Soit $A$ la compl\'etion de $\sym(V)$ pour la topologie d\'efinie par l'id\'eal maximal de $\sym(V)$ engendr\'e par $V$. Soit $\m$ l'id\'eal maximal de $A$. Consid\'erons \'egalement la situation plus g\'en\'erale o\`u $V_1$ est un $\F_p$-espace vectoriel de dimension finie et $\varphi \in \hom_{\F_p}(V_1,V)$, d'image $V_2 \subset V$. On note encore par la lettre $\varphi$ l'application $\F_p$-lin\'eaire de $V_1$ dans $\sym(V)$ obtenue par composition avec $V \subset \sym(V)$. Il est plus agr\'eable pour la suite de reformuler la proposition \ref{prop-Uf-pre} sous la forme suivante.

\begin{prop}\label{prop-Uf}
Fixons $g\in V_2^{*}$. Soit $m=\dim V_2$. Alors pour $s\geq 0$, on a, dans $\hom_{\F_p}(V_1, A)$,
\begin{equation} \label{eq-prop-Uf} \Bigl(\prod_{w\in \bP(V_2)\backslash \bP(\ker g)}w\Bigr)^{p^s}\cdot  (g\circ\varphi) \in \sum_{j=1}^m\m^{p^s(p^{m-1}-p^{j-1})}\varphi^{p^{s+j-1}}.
\end{equation}
\end{prop}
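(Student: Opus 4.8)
The plan is to reduce Proposition~\ref{prop-Uf} to Proposition~\ref{prop-Uf-pre} by a direct substitution/functoriality argument, being careful about the two sources of discrepancy: the passage from $\sym(V)$ to its completion $A$, and the passage from $V$ to the subspace $V_2 = \im(\varphi)$. First I would observe that Proposition~\ref{prop-Uf-pre}, applied to the finite-dimensional $\F_p$-vector space $V_2$ (in place of $V$) and to $g \in V_2^*$, gives an identity in $\hom_{\F_p}(V_2, \sym(V_2))$:
\[
\Bigl(\prod_{w\in \bP(V_2)\backslash \bP(\ker g)}w\Bigr)^{p^s}\cdot g \in \sum_{j=1}^m\m_{V_2}^{p^s(p^{m-1}-p^{j-1})}\,\psi^{p^{s+j-1}},
\]
where $\psi\colon V_2 \hookrightarrow \sym(V_2)$ is the canonical inclusion and $\m_{V_2}$ the maximal ideal of $\sym(V_2)$ generated by $V_2$.

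Next I would push this identity forward along the inclusion $V_2 \subset V$, which induces a graded ring homomorphism $\sym(V_2) \to \sym(V) \to A$. Under this map $\m_{V_2}$ lands in $\m$ (since $V_2 \subseteq V \subseteq \m$), so $\m_{V_2}^r$ maps into $\m^r$ for every $r$; the element $\prod_{w\in \bP(V_2)\backslash \bP(\ker g)} w$ and the maps $\psi^{p^{s+j-1}}$ are defined using only vectors of $V_2$, hence are transported to the corresponding expressions over $A$ with no change. This yields the desired containment in $\hom_{\F_p}(V_2, A)$, namely
\[
\Bigl(\prod_{w\in \bP(V_2)\backslash \bP(\ker g)}w\Bigr)^{p^s}\cdot g \in \sum_{j=1}^m\m^{p^s(p^{m-1}-p^{j-1})}\,\phi_2^{p^{s+j-1}},
\]
where now $\phi_2\colon V_2 \hookrightarrow A$. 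Finally I would precompose everything with $\varphi\colon V_1 \to V_2$: the left-hand side becomes $\bigl(\prod_w w\bigr)^{p^s}\cdot(g\circ\varphi)$, each $\phi_2^{p^{s+j-1}}$ becomes $(\phi_2\circ\varphi)^{p^{s+j-1}} = \varphi^{p^{s+j-1}}$ (using that raising to a $p$-power commutes with the $\F_p$-linear composition, since Frobenius is additive in characteristic $p$), and the scalar ideals $\m^{p^s(p^{m-1}-p^{j-1})}$ are unchanged because precomposition with $\varphi$ is a right $A$-module map on $\hom_{\F_p}(-,A)$. This gives exactly \eqref{eq-prop-Uf}.

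The only genuinely delicate point is bookkeeping: one must check that $\bP(\ker g)$ is interpreted inside $\bP(V_2)$ throughout (so that the product $\prod_{w\in \bP(V_2)\backslash \bP(\ker g)} w$ is the same element whether computed in $\sym(V_2)$ or in $A$), and that the ``composition with $\varphi$'' operation on $\hom_{\F_p}(-,A)$ is $A$-linear on the target so that the coefficient ideals survive the substitution verbatim. Neither is hard, but stating the functoriality of $\hom_{\F_p}(-,A)$ cleanly — i.e.\ that for a fixed $A$-linear target, $(\text{ideal})\cdot(\text{map})\circ\varphi = (\text{ideal})\cdot((\text{map})\circ\varphi)$ — is the crux of making the argument rigorous rather than merely plausible. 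Everything else is formal transport of structure, so the proof should be short.
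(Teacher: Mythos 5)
Your argument is exactly the paper's proof: apply Proposition~\ref{prop-Uf-pre} to $V_2$, compose on the left with the inclusion $\sym(V_2)\subset\sym(V)\subset A$ and on the right with $\varphi$, checking that the coefficient ideals and the product $\prod_{w\in\bP(V_2)\setminus\bP(\ker g)}w$ are transported as claimed. The only cosmetic quibble is that the identity $(\phi_2^{p^{n}})\circ\varphi=\varphi^{p^{n}}$ is purely definitional (the $p$-power is taken after evaluation), so no appeal to additivity of Frobenius is needed.
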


\begin{proof}
Il suffit de composer le r\'esultat de la proposition \ref{prop-Uf-pre}, appliqu\'e \`a $V_2$, \`a droite avec $\varphi$, et \`a gauche avec l'inclusion $\sym(V_2) \subset \sym(V) \subset A$.
\end{proof}

\section{Le r\'esultat principal}

Cette partie est consacr\'ee \`a la d\'emonstration du th\'eor\`eme principal.

\begin{thm}\label{thm-principal}
Soient $\Gamma'$ un sous-groupe ouvert de $\Gamma$ et $I$ un id\'eal non nul de $E[[U]]$ stable par $\Gamma'$. Alors $I$ est un id\'eal ouvert de $E[[U]]$.
\end{thm}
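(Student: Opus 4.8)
The plan is to argue by contradiction: suppose $I$ is a nonzero $\Gamma'$-stable ideal that is not open, and derive a contradiction. Since $E[[U]] \simeq A = E[[X_{i,k}]]$ is a regular local ring of dimension $n=ef$, not being open means $\Ht(I) < n$, so $I$ is contained in some prime $\p$ with $\Ht(\p) \leq n-1$; replacing $I$ by a minimal prime over it (which is still $\Gamma'$-stable, as the Galois-type action permutes minimal primes and we may pass to a smaller open subgroup fixing one of them), we may assume $I=\p$ is prime and nonzero, hence $1 \leq \Ht(\p) \leq n-1$. The strategy is then to show that such a $\p$ is controlled by the proper closed subgroup $\varpi U$, i.e. $\p = \overline{(\p \cap E[[\varpi U]])\cdot E[[U]]}$; iterating, $\p$ would be controlled by $\varpi^N U$ for all $N$, forcing $\p \subseteq \bigcap_N E[[\varpi^N U]]\cdot E[[U]]$. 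That intersection is $0$ (the augmentation-type filtration by $\varpi^N U$ separates points of $E[[U]]$), contradicting $\p \neq 0$.

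The heart is therefore the control step, for which Lemme~\ref{lem-controle} reduces us to proving that $\p$ is stable under the derivations $\partial/\partial X_{0,k}$, $0 \le k \le f-1$. Here the $\Gamma'$-action enters. Conjugation by $\smatr{1+p a}001$ on $U$, differentiated, produces derivations of $E[[U]]$; concretely, the Lie-algebra action of an element of $\mathrm{Lie}(\Gamma)$ gives, on the graded ring $\gr(A) = E[X_{i,k}]$, a derivation whose linear-coordinate effect is the map $v \mapsto \varpi v$ (multiplication by the uniformizer, up to units) on $V := U/U^p \otimes$-stuff, i.e. on the $\F_p$-span of the symbols. The point is that the $E[[\Gamma']]$-module generated by these basic derivations, acting on $\gr(\p)$, contains — after the manipulations of Proposition~\ref{prop-Uf} — enough derivations to see the "top" coordinates $X_{0,k}$. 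Precisely: let $V_1$ be the space of symbols of $\Gamma'$-derivations, $V$ the symbols of all coordinate derivations, $\varphi\colon V_1 \to V$ the natural map, and $g$ a linear form picking out an $X_{0,k}$-derivation; Proposition~\ref{prop-Uf} writes $(\prod w)^{p^s}\cdot(g\circ\varphi)$ as a $\sym(V)$-combination (with controlled degrees $\m^{p^s(p^{m-1}-p^{j-1})}$) of the $\varphi^{p^{s+j-1}}$, which are symbols of iterated $\Gamma'$-derivations. Since $\p$ is $\Gamma'$-stable, it is stable under all these iterated derivations, hence $\gr(\p)$ is stable under the RHS, hence under $(\prod w)^{p^s}\cdot(g\circ\varphi)$ acting on $\gr(\p)$ — i.e. $(\prod w)^{p^s}\cdot \partial/\partial X_{0,k}$ preserves $\gr(\p)$ up to higher-order terms.

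The remaining obstacle, which I expect to be the main one, is to remove the multiplier $(\prod w)^{p^s}$ and pass from "$\gr(\p)$ is preserved by $c \cdot D$ for a fixed nonzero homogeneous $c$" back to "$\p$ is preserved by $D = \partial/\partial X_{0,k}$" and thence to honest control. This is exactly where the function $\delta$ of \S\ref{sec:delta} and Lemme~\ref{lem-delta} are designed to be used: one sets up, for $x = $ (a lift of) $\prod w$ and suitable $P$, the quantity $\delta(x,P)$, and the preservation-up-to-multiplier statement says $\delta(x) = +\infty$ would fail unless $\gr(x) \in \sqrt{\gr(\p)}$, i.e. unless $\prod w$ lies in $\p$ up to radical. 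One then plays off the two cases: either $\prod w \in \sqrt{\gr(\p)}$, in which case some $w \in \gr(\p)$ and we get a linear form of $V$ in $\gr(\p)$, pinning down $\p$ further by an induction on $\dim V$ / on $\Ht(\p)$ after quotienting by that linear form; or $\prod w \notin \sqrt{\gr(\p)}$, and then the $\delta$-estimate (Lemme~\ref{lem-delta}(iii)) lets us genuinely divide out the multiplier in the completed ring and conclude $\partial/\partial X_{0,k}(\p) \subseteq \p$, giving control by $\varpi U$. Care is needed that $\Gamma'$ is only an open subgroup (so its Lie algebra still spans the relevant directions, and one works with sufficiently high $p$-power twists, which is precisely why the $\phi^{p^n}$ with shifted exponent $n_0 \le n \le n_0 + \dim V - 1$ appear), and that the totally ramified case is included — there $e = n$, $f=1$, and the single chain $X_0, X_1, \dots, X_{e-1}$ under multiplication-by-$\varpi$ is what the Vandermonde/Cramer input of Proposition~\ref{prop-Uf-pre} handles uniformly with the unramified case.
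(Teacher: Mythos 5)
Your overall architecture coincides with the paper's (reduction to a prime ideal, control by $\varpi U$ via Lemme~\ref{lem-controle}, the multiplier trick of Proposition~\ref{prop-Uf}, the function $\delta$, iteration along the $\varpi^n U$), but the two points you leave open are exactly where the content lies, and the ways you propose to handle them would fail. First, $\Gamma'$-stability of $\p$ does not give stability under any honest (iterated) derivations: in characteristic $p$ there is no Lie-algebra action on $E[[U]]$, and all one can extract from $\gamma=\smatr{1+p^rx}{0}{0}{1}$ is the Taylor congruence $\sum_{i,k}\rho(\overline{x})(X_{i,k})^{p^r}(1+X_{i,k})\frac{\partial F}{\partial X_{i,k}}\in\p+\m^{2p^r}$, a sum over \emph{all} indices $i$, and only modulo a high power of $\m$. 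Your sketch silently drops the terms with $i\geq 1$. The paper's Proposition~\ref{prop-d(x)} disposes of them by a choice you never make: take $i_0$ maximal such that some $\gr(X_{i_0,k})\notin\sqrt{\gr(\p)}$ and take $\overline{x}\in\F_q\varpi^{i_0}$, so that these contributions land in the $Y_j$ with $j>i_0$, where $\delta=+\infty$, and Lemme~\ref{lem-delta}(iii) absorbs them into $\p+\m^{p^r(1+\epsilon)}$. Without this step, multiplying by $U_g^{p^r}$ as in Proposition~\ref{prop-Uf} does not isolate the derivatives $\partial/\partial X_{0,k}$; note also that Lemme~\ref{lem-delta}(iii) is used for this absorption, not (as you suggest) to divide out the multiplier.

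Second, your dichotomy for removing the multiplier has a broken branch: from the product of linear forms lying in $\sqrt{\gr(\p)}$ you cannot conclude that a single factor lies in $\gr(\p)$, nor even in $\sqrt{\gr(\p)}$, because $\gr(\p)$ and its radical need not be prime even when $\p$ is; and the proposed induction \og after quotienting by that linear form \fg{} is not meaningful (the form lives in $\gr(E[[U]])$, not in $\p$) and would not preserve the $\Gamma'$-equivariant setup. The paper never enters this branch: by the Nullstellensatz, the ideal of $E[X_{i_0,0},\dots,X_{i_0,f-1}]$ generated by \emph{all} the $U_g$, $g\in Y_{i_0}^{*}\setminus\{0\}$, contains a power of the irrelevant ideal, so condition (b) on $i_0$ guarantees some $g$ with $\gr(U_g)\notin\sqrt{\gr(\p)}$, i.e.\ $\delta(U_g)=0$; then Corollaire~\ref{cor-delta} gives $\delta(U_g,P)=+\infty$, Lemme~\ref{lem-delta}(ii) forces $P\in\p$, and varying $\overline{x}$ through Lemme~\ref{lem-rho(x)}(ii) yields $(1+X_{0,k})\frac{\partial F}{\partial X_{0,k}}\in\p$ for every $k$. (A smaller slip: \og $\p\subseteq\bigcap_N E[[\varpi^N U]]\cdot E[[U]]=0$ \fg{} is vacuous as written, since $1\in E[[\varpi^N U]]$; the correct endgame is $\p=\overline{(\p\cap E[[\varpi^N U]])E[[U]]}\subseteq\m^{c_N}$ with $c_N\to\infty$.) So the skeleton matches the paper, but as it stands the proposal has genuine gaps at its core.
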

\medskip

Commen\c{c}ons par nous ramener au cas o\`u $I$ est premier.

\begin{lem}\label{lem-reduction}
Si l'id\'eal maximal $\m$ de $E[[U]]$ est le seul id\'eal premier non nul de $E[[U]]$ stable par un sous-groupe ouvert de $\Gamma$, alors le th\'eor\`eme \ref{thm-principal} est vrai.
\end{lem}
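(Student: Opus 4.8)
The plan is to argue by contradiction, using the classical associated-primes/primary-decomposition machinery together with the fact that $E[[U]]$ is noetherian. Suppose the hypothesis holds but the conclusion of Th\'eor\`eme~\ref{thm-principal} fails: there is a non-nul ideal $I$, stable under an open subgroup $\Gamma'$ of $\Gamma$, which is not ouvert, i.e.\ $\Ht(I) < n = [F:\Q_p]$. We want to extract from $I$ a non-nul prime ideal $\p \neq \m$ that is stable under an open subgroup of $\Gamma$, contradicting the hypothesis.

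First I would pass to the radical: $\sqrt{I}$ is again non-nul and is stable under $\Gamma'$, since the action of $\Gamma'$ is by ring automorphisms of $E[[U]]$ and hence permutes the minimal primes over $I$; moreover $\sqrt{I}$ is not $\m$ (it would force $I$ to be $\m$-primary, hence ouvert, contradiction), and $\Ht(\sqrt{I}) = \Ht(I) < n$ so $\sqrt{I} \ne \m$ on height grounds. Write $\sqrt{I} = \p_1 \cap \cdots \cap \p_r$ for its (finitely many) distinct minimal primes; none of these is $\m$, again by the height count, and each is non-nul since it contains the non-nul ideal $\sqrt{I}$. The key point is then that $\Gamma'$ acts on the finite set $\{\p_1,\dots,\p_r\}$ by permutations: for $\gamma \in \Gamma'$, $\gamma(\sqrt{I}) = \sqrt{I}$, so $\gamma$ permutes the minimal primes of $\sqrt{I}$. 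Since $\Gamma'$ is a topological group acting continuously and the set is finite, the stabiliser $\Gamma'' \subset \Gamma'$ of $\p_1$ is an open subgroup (it is the kernel of a continuous map to a finite symmetric group, or simply a finite-index closed, hence open, subgroup of the compact $p$-adic analytic group $\Gamma'$). Thus $\p_1$ is a non-nul prime ideal, distinct from $\m$, stable under the open subgroup $\Gamma'' \subset \Gamma \subset \Gamma$ — contradicting the hypothesis.

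The only mildly delicate points are the two uses of openness of a finite-index subgroup of $\Gamma'$ and the continuity of the $\Gamma'$-action on $E[[U]]$ (so that $\gamma(\sqrt I)$ makes sense and $\gamma$ is a continuous automorphism carrying closed ideals to closed ideals, primes to primes); both are standard for the Iwasawa algebra of the compact $p$-adic group $\Gamma'$ acting by conjugation, and $\Gamma'$ being a $p$-adic analytic group all of its open subgroups have finite index and every finite-index subgroup is open. I expect no real obstacle here: the lemma is a routine d\'evissage, and the substance of Th\'eor\`eme~\ref{thm-principal} is entirely in the prime case, which is where Lemme~\ref{lem-delta}, Lemme~\ref{lem-controle} and Proposition~\ref{prop-Uf} will be brought to bear.
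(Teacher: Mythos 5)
Your argument is correct and is essentially the paper's own d\'evissage: the paper attaches to $I$ the finite set $\mathrm{Ass}(E[[U]]/I)$ (whose minimal elements are exactly your minimal primes of $\sqrt{I}$), notes that $\Gamma'$ permutes this finite set so that an open subgroup fixes each element, and concludes via the prime case just as you do. The only cosmetic difference is that you argue by contradiction with the minimal primes of $\sqrt{I}$ rather than reducing to the prime case through associated primes; both versions rest on the same ingredients (noetherianity, finiteness of the set of primes attached to $I$, and openness of finite-index subgroups of $\Gamma'$).
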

\begin{proof} L'anneau $E[[U]]$ \'etant noeth\'erien, l'ensemble $\mathrm{Ass}(E[[U]]/I)$ des id\'eaux premiers associ\'es \`a $I$ est fini, ses \'el\'ements sont les id\'eaux premiers de la forme
\[(I:s),\ \ s\in E[[U]].\]
On v\'erifie directement que si $\gamma\in \Gamma'$ et si $(I:s)$ est premier, alors $(I:\gamma(s))$ l'est aussi. Autrement dit, l'ensemble $\mathrm{Ass}(E[[U]]/I)$ est stable par $\Gamma'$. Comme $\mathrm{Ass}(E[[U]]/I)$ est fini, il existe un sous-groupe ouvert $\Gamma''$ de $\Gamma'$ qui fixe tous les \'el\'ements de $\mathrm{Ass}(E[[U]]/I)$. On peut donc supposer que $I$ est premier, car $I$ est ouvert si et seulement si tous les \'el\'ements de $\mathrm{Ass}(E[[U]]/I)$ le sont.
\end{proof}
\medskip

Soit $\mathfrak{m}$ l'id\'eal maximal de $E[[U]]$ et  d\'efinissons
\[V:= \bigoplus_{0\leq i\leq e-1,0\leq k\leq f-1}\F_p X_{i,k}.\]
D'apr\`es l'isomorphisme (\ref{equation-X_ik}), on a $
V \otimes_{\F_p} E \simeq \m / \m^2$.
On peut ainsi identifier $V$ \`a un $\F_p$-sous-espace de $\m/\m^2$, ce que nous ferons dor\'enavant sans aucun commentaire ult\'erieur. Posons enfin $Y_i=\bigoplus_{k=0}^{f-1}\F_p X_{i,k}$ de sorte que $V=\bigoplus_{i=0}^{ e-1} Y_i$.

\begin{defn}
On d\'efinit un morphisme $\rho:\cO_F/p\cO_F\ra \End_{\F_p}(V)$ comme suit: si $\overline{x}\in\cO_F/p\cO_F$ et si $x\in\cO_F$ est un rel\`evement de $\overline{x}$, alors
\[\rho(\overline{x})(X_{i,k}):=\matr1{x\cdot\varpi^i[\lambda^k]}01 -1 \mod \m^2.\]
\end{defn}
\begin{rem} Si on pose $\g=\cO_F/p \cO_F$, il s'agit de l'action de $\g$ sur $V$ d\'ecrite dans \emph{\cite[\S$4.2$]{AWinv}}. Comme notre cas est tr\`es particulier, nous n'utilisons pas le formalisme de cet article.
\end{rem}
Puisque $\smatr1{x+y}01-1=\smatr1{x}01\smatr1{y}01 -1\equiv \bigl(\smatr1x01-1\bigr)+\bigl(\smatr1y01-1\bigr)\mod \m^2,$
et que $\smatr1{py}01-1=\bigl(\smatr1y01-1\bigr)^p\in\m^p$ pour tout $x,y\in\cO_F$, on voit que la d\'efinition ci-dessus ne d\'epend pas du choix de $x$ et que $\rho$ est bien un morphisme d'espaces vectoriels sur $\F_p$. On peut identifier  $\cO_F/p\cO_F$ avec $\F_q[\varpi]/(\varpi^e)=\bigoplus_{i=0}^{e-1}\F_q\varpi^i$.
\begin{lem}\label{lem-rho(x)}
Fixons $i\in\{0,...,e-1\}$.
\begin{enumerate}
\item[(i)] Soit $\overline{x}\in \F_q\varpi^{i}$. Alors $\rho(\overline{x})(Y_j)=\left\{\begin{array}{clc}Y_{i+j}& \mathrm{si}\ i+j\leq e-1 \\
0& \mathrm{si}\ i+j\geq e.\end{array}\right.$

\item[(ii)] Soient $0\leq j\leq e-1-i$ et $g\in Y_{i+j}^*$ non nul. Alors l'application de $\F_q\varpi^i$ dans $Y_{j}^*$ donn\'ee par $\overline{x}\mapsto g\circ\rho(\overline{x})$ est une bijection.
\end{enumerate}
\end{lem}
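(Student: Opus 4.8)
The plan is to compute directly with the formula defining $\rho$, using the identification $\cO_F/p\cO_F \simeq \bigoplus_{i=0}^{e-1}\F_q\varpi^i$ and the fact that $V = \bigoplus_i Y_i$ with $Y_i = \bigoplus_k \F_p X_{i,k}$. For part (i), I would first treat the case $\overline x = [\lambda^{k'}]\varpi^{i}$ for a single Teichmüller element, and then extend by $\F_p$-linearity of $\rho$. The key identity is
\[
\matr1{[\lambda^{k'}]\varpi^{i}\cdot\varpi^{j}[\lambda^k]}01 - 1 \;\equiv\; \matr1{\varpi^{i+j}[\lambda^{k'}][\lambda^k]}01 - 1 \pmod{\m^2},
\]
so I need to understand the product $[\lambda^{k'}][\lambda^k]$ modulo $\varpi$: since the Teichmüller lift is multiplicative and $[\lambda^{k'}][\lambda^k] \equiv [\lambda^{k'+k}] \pmod{p}$, writing $\lambda^{k'+k} = \sum_l c_l \lambda^l$ with $c_l\in\F_p$ and lifting, one gets $\varpi^{i+j}[\lambda^{k'}][\lambda^k] \equiv \sum_l c_l \varpi^{i+j}[\lambda^l] \pmod{p\cO_F + \varpi^{i+j+1}\cO_F}$. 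When $i+j \le e-1$, the terms $\varpi^{i+j}[\lambda^l]$ are among the chosen basis elements of $\cO_F/p\cO_F$, so $\rho([\lambda^{k'}]\varpi^i)(X_{j,k}) = \sum_l c_l X_{i+j,l} \in Y_{i+j}$; when $i+j \ge e$ the element $\varpi^{i+j}[\lambda^{k'}][\lambda^k]$ already lies in $\varpi^e\cO_F \subset p\cO_F$ (up to higher order), hence in $\m^p \subseteq \m^2$, giving $0$. Linearity in $\overline x$ then yields the stated formula on all of $\F_q\varpi^i$, and one checks the map is onto $Y_{i+j}$ (resp. has the right image) by noting that $\overline x \mapsto \rho(\overline x)(X_{j,0})$ already hits a basis of $Y_{i+j}$ when $i+j\le e-1$.

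For part (ii): fix $0\le j\le e-1-i$ and $g \in Y_{i+j}^*$ nonzero. By part (i), $\rho(\overline x)$ maps $Y_j$ into $Y_{i+j}$, so $g\circ\rho(\overline x) \in Y_j^*$ is well-defined, and the map $\Phi\colon \F_q\varpi^i \to Y_j^*,\ \overline x \mapsto g\circ\rho(\overline x)$ is $\F_p$-linear by linearity of $\rho$. Both source and target have $\F_p$-dimension $f$ (the source is $\F_q\varpi^i \cong \F_q$, and $\dim_{\F_p} Y_j^* = \dim_{\F_p} Y_j = f$), so it suffices to prove injectivity. Suppose $g\circ\rho(\overline x) = 0$, i.e. $g$ vanishes on $\rho(\overline x)(Y_j)$. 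Using the explicit description from (i): identifying $Y_j \cong \F_q$ via $X_{j,k}\leftrightarrow \lambda^k$ and $Y_{i+j}\cong\F_q$ similarly, the map $\rho(\overline x)\colon Y_j \to Y_{i+j}$ becomes multiplication by the residue class $\bar a \in \F_q$ of $\overline x/\varpi^i$; since $\F_q$ is a field, this is either $0$ (if $\bar a = 0$) or a bijection. If $\bar a \ne 0$ then $\rho(\overline x)(Y_j) = Y_{i+j}$ and $g\ne 0$ forces $g\circ\rho(\overline x)\ne 0$, a contradiction; hence $\bar a = 0$, i.e. $\overline x = 0$ in $\F_q\varpi^i$. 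So $\Phi$ is injective, hence bijective.

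The only genuinely delicate point is bookkeeping modulo $\m^2$ in part (i): one must be careful that cross-terms from the additive expansion of $\smatr1{a}01\smatr1{b}01 - 1$ lie in $\m^2$ (they do, being products $(\smatr1a01-1)(\smatr1b01-1)$), and that $p$-multiples land in $\m^p\subseteq\m^2$ (which requires $p\ge 2$, always true). Once the reduction $\rho([\lambda^{k'}]\varpi^i)(X_{j,k}) \equiv \sum_l c_l X_{i+j,l}$ with $(c_l)$ the coordinates of $\lambda^{k'+k}$ is established, parts (i) and (ii) are immediate from linear algebra over the field $\F_q$; I expect no real obstacle beyond this computation. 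I would also remark that, abstractly, the content of (i)–(ii) is exactly that the $\g$-module $V$ of \cite[\S4.2]{AWinv} is here $\bigoplus_{i}\F_q\varpi^i$ with $\g = \F_q[\varpi]/(\varpi^e)$ acting by multiplication, but giving the direct argument keeps the paper self-contained as announced in the Remark.
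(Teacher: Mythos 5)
Your strategy is in substance the paper's own (transport everything to $U/U^p\simeq \cO_F/p\cO_F$ via $u\mapsto (u-1)+\m^2$ and read $\rho(\overline{x})$ as multiplication by $\overline{x}$), but the single computation on which both parts rest is exactly where your bookkeeping is too weak. You justify only
\[
\varpi^{i+j}[\lambda^{k'}][\lambda^{k}]\;\equiv\;\sum_{l}c_l\,\varpi^{i+j}[\lambda^{l}]\pmod{p\cO_F+\varpi^{i+j+1}\cO_F},
\]
because lifting $\lambda^{k'+k}=\sum_l c_l\lambda^l$ term by term gives agreement of Teichm\"uller representatives only modulo $\varpi$. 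But an element of $\varpi^{i+j+1}\cO_F$ does \emph{not} die in $V$: under the isomorphism $u\mapsto(u-1)+\m^2$ from $U/U^p\simeq\cO_F/p\cO_F$ onto $V$, the subgroup $\varpi^{i+j+1}\cO_F/p\cO_F$ maps onto $Y_{i+j+1}\oplus\cdots\oplus Y_{e-1}$, not to $0$. So from your congruence you may only conclude $\rho([\lambda^{k'}]\varpi^{i})(X_{j,k})\in \sum_l c_l X_{i+j,l}+\bigoplus_{m>i+j}Y_m$, which yields neither the equality $\rho(\overline{x})(Y_j)=Y_{i+j}$ of (i) nor even the inclusion $\rho(\overline{x})(Y_j)\subseteq Y_{i+j}$ as soon as $e\geq 2$ and $i+j<e-1$; and part (ii), which uses the identification of $\rho(\overline{x})\vert_{Y_j}$ with multiplication by $\overline{a}$ on $\F_q$, inherits the gap. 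The points you flag as delicate (cross terms lying in $\m^2$, $p$-multiples lying in $\m^p$) are indeed harmless — they are what makes $\rho$ well defined — whereas this is the genuine crux.

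The missing ingredient is that Teichm\"uller representatives are additive modulo $p$, not merely modulo $\varpi$: $[\mu]+[\mu']\equiv[\mu+\mu']\pmod{pW(\F_q)}$, i.e. $W(\F_q)/p\cong\F_q$ embeds as a subring of $\cO_F/p\cO_F$; this is precisely what the identification $\cO_F/p\cO_F\cong\F_q[\varpi]/(\varpi^e)=\bigoplus_i\F_q\varpi^i$ stated just before the lemma amounts to, and it also shows that $Y_j$ corresponds exactly to $\F_q\varpi^j$ (not just up to higher $\varpi$-order). With it, $[\lambda^{k'+k}]\equiv\sum_l c_l[\lambda^l]\pmod{p\cO_F}$, hence $\varpi^{i+j}[\lambda^{k'+k}]\equiv\sum_l c_l\varpi^{i+j}[\lambda^l]\pmod{p\cO_F}$ with no $\varpi^{i+j+1}$ error term, your exact formula holds, and both (i) and (ii) follow as you say. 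Once this is inserted your argument becomes correct and is essentially the paper's one-line proof, which bypasses the coordinate computation by observing directly that the induced action on $U/U^p\simeq\cO_F/p\cO_F$ is multiplication by $\overline{x}$ in the ring $\F_q[\varpi]/(\varpi^e)$.
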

\begin{proof}
L'application $u \mapsto (u-1)+\m^2$ est un isomorphisme de groupes de $U/U^p$ sur $V$ et l'action de $\rho(\overline{x})$ sur $V$ induit une action sur $U/U^p \simeq \cO_F / p \cO_F$ qui est donn\'ee par la multiplication par $\overline{x}$. Les deux \'enonc\'es s'en d\'eduisent.
\end{proof}
\medskip

\begin{lem}\label{lem-hauteur}
Soit $\p$ un id\'eal premier de $E[[U]]$. Pour que $\p$ soit l'id\'eal maximal de $E[[U]]$, il faut et il suffit que $\sqrt{\gr(\p)}=\langle X_{i,k}; 0\leq i\leq e-1,0\leq k\leq f-1\rangle$ dans $\gr(E[[U]])\cong E[X_{i,k}]$.
\end{lem}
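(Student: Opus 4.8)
The statement is an elementary characterization: a prime $\p \subset E[[U]]$ is maximal if and only if $\sqrt{\gr(\p)}$ is the full augmentation ideal of the graded ring $\gr(E[[U]]) \cong E[X_{i,k}]$. The plan is to prove both implications by passing between the Krull dimension of $E[[U]]/\p$ and that of the associated graded ring. The key input is the standard fact that for a Noetherian local ring $(R,\m)$ one has $\dim R = \dim \gr_\m(R)$, together with the compatibility $\gr_{\m/\p}(R/\p) \cong \gr(R)/\gr(\p)$ that identifies the associated graded of the quotient with the quotient by the graded ideal.

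First I would treat the easy direction. If $\p = \m$, then $\gr(\p) = \gr(\m)$ is by definition the ideal generated by all the symbols $\gr(u-1)$, which under the isomorphism $\gr(E[[U]]) \cong E[X_{i,k}]$ is exactly $\langle X_{i,k}\rangle$; this ideal is already radical and maximal, so $\sqrt{\gr(\p)} = \langle X_{i,k}\rangle$. For the converse, suppose $\sqrt{\gr(\p)} = \langle X_{i,k}\rangle$. Then $\gr(E[[U]])/\gr(\p)$ is a graded $E$-algebra whose nilradical is the irrelevant ideal, hence it is a finite-dimensional $E$-vector space, i.e.\ it has Krull dimension $0$. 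Using $\gr(E[[U]])/\gr(\p) \cong \gr_{\m/\p}(E[[U]]/\p)$ and the dimension formula $\dim(E[[U]]/\p) = \dim \gr_{\m/\p}(E[[U]]/\p) = 0$, we conclude that $E[[U]]/\p$ is a zero-dimensional Noetherian local domain, hence a field, which forces $\p = \m$.

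\textbf{Main obstacle.} The arguments are all standard commutative algebra, so there is no deep obstacle; the only point requiring a little care is the identification $\gr(\p) \subseteq \gr(E[[U]])$ — as defined in the excerpt via symbols of elements of $\p$ — with the kernel of the surjection $\gr(E[[U]]) \to \gr_{\m/\p}(E[[U]]/\p)$, i.e.\ checking that $\gr(E[[U]])/\gr(\p)$ really is the associated graded ring of the quotient. This is the classical statement that the map on associated gradeds induced by $E[[U]] \to E[[U]]/\p$ is surjective with kernel $\gr(\p)$, which holds because $\m^k/(\m^{k+1}) \to \m^k/(\m^{k+1} + (\p \cap \m^k))$ is surjective with the expected kernel; it is already implicitly used in the discussion of $\nu$ and $\delta$ preceding Lemma~\ref{lem-delta}. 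Once this is in hand, invoking $\dim R = \dim \gr_\m R$ for both $R = E[[U]]$ (to see $\dim = n = ef$, consistent with $\langle X_{i,k}\rangle$ having height $n$) and $R = E[[U]]/\p$ finishes the proof. I would also remark that this lemma is precisely what lets one later detect openness of an ideal $I$ by checking that $\gr(\p) \not\subseteq \sqrt{\gr(\p)}$ fails only for $\p = \m$, which is the route by which the main theorem reduces (via Lemma~\ref{lem-reduction}) to showing no proper nonzero prime is stable under an open subgroup of $\Gamma$.
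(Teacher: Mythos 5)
Your proof is correct, but it runs through a different (and somewhat heavier) mechanism than the paper's. Both arguments start from the same observation that $\sqrt{\gr(\p)}=\langle X_{i,k}\rangle$ forces $\gr(\m)^N\subseteq\gr(\p)$ for some $N$. The paper then stays at the level of the filtration: this containment gives $\m^N=(\p\cap\m^N)+\m^{N+1}$, Nakayama applied to the finitely engendered module $\m^N/(\p\cap\m^N)$ yields $\m^N\subseteq\p$, and primeness gives $\p=\m$ --- three lines, no dimension theory. You instead pass to the quotient: you identify $\gr(E[[U]])/\gr(\p)$ with $\gr_{\m/\p}(E[[U]]/\p)$ (your verification of the kernel, which amounts to $\m^k\cap(\m^{k+1}+\p)=\m^{k+1}+(\p\cap\m^k)$, is indeed the only point needing care, and it holds), observe that this graded quotient has Krull dimension $0$ because its nilradical is the irrelevant ideal, and invoke $\dim R=\dim\gr_{\m_R}(R)$ for the Noetherian local ring $R=E[[U]]/\p$, concluding that a zero-dimensional local domain is a field. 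This is valid; what it buys is a more general statement (the same argument shows $\dim E[[U]]/\p=\dim\,\gr(E[[U]])/\gr(\p)$ for any ideal, hence relates the height of $\p$ to that of $\gr(\p)$, not only the extreme case), at the cost of invoking the Hilbert--Samuel dimension equality where the paper needs only Nakayama. One incidental remark: your closing comment on how the lemma is used later is garbled as written (``$\gr(\p)\not\subseteq\sqrt{\gr(\p)}$ fails only for $\p=\m$''); the actual use is that for a nonzero nonmaximal prime some $\gr(X_{i,k})$ lies outside $\sqrt{\gr(\p)}$, which produces the index $i_0$ and conditions (a) and (b). This does not affect the correctness of your proof of the lemma itself.
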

\begin{proof}
La n\'ecessit\'e est imm\'ediate. Prouvons la suffisance. Si $\gr(\p)$ contient une puissance de $\gr(\m)$, on a $\p+\m^{n+1}=\m^n$ pour $n$ assez grand. En utilisant le lemme de Nakayama, on montre qu'alors $\p=\m^n$ puis $\p=\m$ vu que $\p$ est premier.
\end{proof}

Consid\'erons un id\'eal premier $\p$ non nul qui n'est pas maximal. Le lemme \ref{lem-hauteur} implique que $\sqrt{\gr(\p)}$ ne contient pas tous les $\gr(X_{i,k})$. Il existe donc un indice $i_0\in\{0,...,e-1\}$ tel que
\begin{enumerate}
\item[(a)] $\gr(X_{j,k})\in\sqrt{\gr(\p)}$ pour tout $j> i_0$ et tout $0\leq k\leq f-1$;

\item[(b)] il existe $k'\in\{0,...,f-1\}$ tel que $\gr(X_{i_0,k'})\notin \sqrt{\gr(\p)}$.
\end{enumerate}

\begin{prop} \label{prop-d(x)}
Soit $\p$ un id\'eal premier de $E[[U]]$ stabilis\'e par un sous-groupe ouvert $\Gamma' \subset \Gamma$, v\'erifiant les conditions (a) et (b) ci-dessus. Soit $F\in \p$. Il existe $\epsilon>0$ tel que pour tout $r$ suffisamment grand l'on ait
\begin{equation} \label{eq-taylor} \forall \overline{x}\in\F_q\varpi^{i_0}, \ \ \ \sum_{k=0}^{f-1}\rho(\overline{x})(X_{0,k})^{p^r}\cdot (1+X_{0,k})\frac{\partial F}{\partial X_{0,k}}\in \p+\m^{p^{r}(1+\epsilon)}.
\end{equation}
\end{prop}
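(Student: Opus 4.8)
The plan is to exploit the $\Gamma'$-stability of $\p$ to obtain, for each element $\overline{x}\in\F_q\varpi^{i_0}$, a derivation of $E[[U]]$ that (a) preserves $\p$, and (b) whose "symbol" is essentially $\sum_k \rho(\overline{x})(X_{0,k})\,\partial/\partial X_{0,k}$, and then to iterate and use the $\delta$-function machinery to control the $\m$-adic error term. First I would recall that $\Gamma'$ acts on $E[[U]]$ by algebra automorphisms, and that the logarithm of the action of a topological generator of $\Gamma'$ (or more directly, the family of derivations $D_{\overline{x}}$ obtained by differentiating the conjugation action of $\matr{1+p t x}001$ at $t=0$, for $x$ a lift of $\overline{x}$) gives a derivation $D_{\overline{x}}$ of $E[[U]]$ stabilizing $\p$. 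Since $F\in\p$, we get $D_{\overline{x}}(F)\in\p$ for all $\overline{x}\in\cO_F/p\cO_F$; restricting to $\overline{x}\in\F_q\varpi^{i_0}$ will be what feeds into \eqref{eq-taylor}.

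The technical heart is to identify $D_{\overline{x}}$ explicitly on the variables. Differentiating $\matr1{(1+ptx)\varpi^j[\lambda^k]}01$ at $t=0$ and reducing shows that $D_{\overline{x}}$ sends $X_{j,k}$ to (the image of) $(1+X_{j,k})\cdot\bigl(\matr1{x\varpi^j[\lambda^k]}01-1\bigr)$, i.e. $D_{\overline{x}}=\sum_{j,k}\rho(\overline{x})(X_{j,k})\,(1+X_{j,k})\,\partial/\partial X_{j,k}$ up to the precise shape of $\rho$; here I would use Lemma~\ref{lem-rho(x)}(i): for $\overline{x}\in\F_q\varpi^{i_0}$, $\rho(\overline{x})(Y_j)\subset Y_{i_0+j}$, which vanishes when $i_0+j\ge e$. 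Thus only the summands with $j\le e-1-i_0$ survive, and by condition (a) all variables $X_{j,k}$ with $j>i_0$ have symbols in $\sqrt{\gr(\p)}$. The idea is then to apply Lemma~\ref{lem-delta}(iii) to each such $X_{j,k}$ (with $j>i_0$): there is $\epsilon_0>0$ with $\nu(X_{j,k}^{p^r})\ge (1+\epsilon_0)\deg(X_{j,k}^{p^r})=(1+\epsilon_0)p^r$ for $r$ large, so raising those factors of $D_{\overline{x}}$ to the power $p^r$ pushes them into $\p+\m^{p^r(1+\epsilon_0)}$.

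Concretely, I would not work with $D_{\overline{x}}$ itself but with a $p^r$-th power twist: apply $D_{\overline{x}}$ once to $F\in\p$ to land in $\p$, then observe $D_{\overline{x}}(F)=\sum_{k}\rho(\overline{x})(X_{0,k})(1+X_{0,k})\partial F/\partial X_{0,k}+\sum_{j=1}^{e-1-i_0}\sum_k\rho(\overline{x})(X_{j,k})(1+X_{j,k})\partial F/\partial X_{j,k}$; the second (bad) sum has every term divisible by some $X_{j,k}$ with $j>i_0$. To get \eqref{eq-taylor} with the Frobenius-twisted coefficient $\rho(\overline{x})(X_{0,k})^{p^r}$ rather than the first power, I would instead iterate: feed $x'=p^{?}$-scaled lifts, or more cleanly, use that $\rho(p^r$-th power data$)$ arises from the action of $\matr{1+p t x}001$ composed with Frobenius on $\cO_F/p$, and that $\bigl(\matr1{y}01-1\bigr)^{p^r}=\matr1{p^r y}01-1$ gives $X_{0,k}^{p^r}$ exactly as the image of the $p^r$-scaled group element. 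So applying the derivation attached to $\overline{x}$ but reading off the $X_{0,k}^{p^r}$-coefficient (equivalently, using the automorphism for $\matr{1+p x}001$ and extracting the degree-$p^r$ part of its $\log$) yields a new element of $\p$ of the form $\sum_k\rho(\overline{x})(X_{0,k})^{p^r}(1+X_{0,k})\partial F/\partial X_{0,k}$ plus an error lying in $\p+\bigl(\text{ideal generated by }X_{j,k}^{p^r},\,j>i_0\bigr)\subset\p+\m^{p^r(1+\epsilon_0)}$ by Lemma~\ref{lem-delta}(iii). Taking $\epsilon=\epsilon_0$ and $r$ large finishes it.

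The main obstacle I anticipate is bookkeeping the exact derivation attached to $\Gamma'$ and matching it with $\rho$ and the Frobenius twist: one must check that $\log$ of the automorphism of $\matr{1+px}001$ really is (up to a unit and higher-order terms that can be absorbed) $\sum_{j,k}\rho(\overline{x})(X_{j,k})(1+X_{j,k})\partial/\partial X_{j,k}$, and that iterating or composing with Frobenius produces the $p^r$-th powers $\rho(\overline{x})(X_{0,k})^{p^r}$ cleanly, without generating cross-terms that spoil the $\m^{p^r(1+\epsilon)}$ estimate. The estimate itself is then a routine application of Lemma~\ref{lem-delta}(iii) to the finitely many variables $X_{j,k}$ with $j>i_0$, taking $\epsilon$ to be the minimum of the finitely many $\epsilon$'s produced.
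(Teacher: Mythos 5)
The second half of your argument --- absorbing the contributions coming from $Y_j$ with $j>i_0$ via Lemma \ref{lem-rho(x)}(i), condition (a) and Lemma \ref{lem-delta}(iii), taking $\epsilon$ as a minimum over finitely many cases --- is exactly what is needed and coincides with the paper. The gap is in the first half: the operator you want to apply to $F$ is never constructed, and cannot be constructed as you describe. In characteristic $p$ there is no derivation $D_{\overline{x}}$ obtained by ``differentiating the conjugation action at $t=0$'', nor a $\log$ of the automorphism attached to $\smatr{1+px}001$: the parameter is $p$-adic while the coefficients are in $E$, and the logarithm involves divisions by $p$. Your fallback --- ``reading off the $X_{0,k}^{p^r}$-coefficient'' or ``extracting the degree-$p^r$ part of the log'' --- is not a legitimate operation: a graded or truncated piece of an automorphism stabilizing $\p$ has no reason to send $F\in\p$ into $\p$, and membership in $\p$ up to a small error is precisely what the proposition asserts. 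Moreover a single first-order operator would produce the coefficients $\rho(\overline{x})(X_{0,k})$, not their $p^r$-th powers, and you give no precise device turning one into the other; you yourself flag this as the main obstacle, and it is indeed where the proof is missing.

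The paper's mechanism is simpler and is the missing idea: for each large $r$, use the genuine group element $\gamma=\smatr{1+p^rx}001$, which lies in $\Gamma'$ once $r$ is large. Since $\bigl(\smatr1y01-1\bigr)^{p^r}=\smatr1{p^ry}01-1$ in characteristic $p$, one has $\gamma(X_{i,k})\equiv X_{i,k}+(1+X_{i,k})\,\rho(\overline{x})(X_{i,k})^{p^r} \bmod \m^{2p^r}$, so the Frobenius twists appear for free, with no logarithm and no cross-terms. A first-order Taylor expansion of $\gamma(F)-F$ then yields $\sum_{i,k}\rho(\overline{x})(X_{i,k})^{p^r}(1+X_{i,k})\,\partial F/\partial X_{i,k}\in\p+\m^{2p^r}$ because $\gamma(F)\in\p$; the error $\m^{2p^r}$ is harmless since one may take $\epsilon\leq 1$. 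Your estimate on the terms with $i\geq 1$ (applied to the elements $\rho(\overline{x})(X_{i,k})\in Y_{i_0+i}$, whose $\delta$ is $+\infty$; by Frobenius-linearity this is equivalent to your variable-by-variable version) then concludes exactly as in the paper. In short: replace the hypothetical derivation by the family of group elements $\smatr{1+p^rx}001$ indexed by $r$, and the rest of your outline goes through.
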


\begin{proof}
Soit $\overline{x} \in \F_q \varpi^{i_0}$. En choisissant $x\in\cO_F$ un rel\`evement de $\overline{x}$,  on a, par d\'efinition, pour tout $0\leq i\leq e-1, 0\leq k\leq f-1$:
\[\rho(\overline{x})(X_{i,k})^{p^r}\equiv \smatr1{p^rx\cdot\varpi^i[\lambda^k]}01 -1 \mod \m^{2p^r}.\]
Posons $\gamma=\smatr{1+p^rx}001$ avec $r$ suffisamment grand pour que $\gamma\in\Gamma'$.
Par d\'efinition de l'action de $\gamma$ sur $U$, on obtient
\[\gamma(X_{i,k})=\smatr{1}{(1+p^rx)\varpi^i[\lambda^k]}01-1\equiv X_{i,k}+(1+X_{i,k})\cdot \rho(\overline{x})(X_{i,k})^{p^r}\mod \m^{2p^r}.\]
Soit maintenant $F\in \p$. En \'ecrivant le d\'eveloppement de Taylor \`a l'ordre 1 de $\gamma(F)=F(\gamma(X_{i,k}))$, on voit que 
\[ \gamma(F)-
F -\sum_{i,k}\rho(\overline{x})(X_{i,k})^{p^r}\cdot (1+X_{i,k})\frac{\partial F}{\partial X_{i,k}}\in\m^{2p^{r}}.\]
Comme $\p$ est stable par $\Gamma'$, on a $\gamma(F)\in \p$ et donc
\[\sum_{i,k}\rho(\overline{x})(X_{i,k})^{p^r}\cdot (1+X_{i,k})\frac{\partial F}{\partial X_{i,k}}\in \p+\m^{2p^{r}}.\]
Pour $\overline{x}\in \F_q\varpi^{i_0}$, le lemme \ref{lem-rho(x)} implique que
\begin{equation} \label{inclusions} \rho(\overline{x})(Y_0)\subseteq Y_{i_0}, \ \ \dots  , \ \ \rho(\overline{x})(Y_{e-1-i_0})\subseteq Y_{e-1} \ \ \mathrm{et}\ \ \rho(\overline{x})(Y_i)=0\ \  \forall i\geq e-i_0.
\end{equation}
Par ailleurs, l'hypoth\`ese sur $i_0$ et le lemme \ref{lem-delta} $(i)$ montrent que $\delta(X_{j,k})=+\infty$ pour $j>i_0$ et $0 \leq k \leq f-1$. Autrement dit, $\delta(y)=+\infty$ pour $y \in Y_j$, $j  >i_0$. Combin\'ees \`a \eqref{inclusions}, ces \'egalit\'es nous donnent $
\delta(\rho(\overline{x})(X_{i,k}))=+\infty$ pour tout $1\leq i\leq e-1-i_0$, $0\leq k\leq f-1$. 
Comme $\deg (\rho(\overline{x})(X_{i,k})^{p^r})\geq p^r$ on d\'eduit du lemme \ref{lem-delta} $(iii)$ l'existence de $\epsilon>0$ tel que
\[
\rho(\overline{x})(X_{i,k})^{p^r}\cdot (1+X_{i,k})\frac{\partial F}{\partial X_{i,k}}\in \p+\m^{(1+\epsilon)p^r}
\]
pour tout $1\leq i\leq e-1-i_0$, $0\leq k\leq f-1$ et $r$ suffisamment grand.
Ceci permet de conclure.
\end{proof}

Le corollaire qui suit est le r\'esultat cl\'e pour d\'emontrer le th\'eor\`eme principal.
Il utilise l'estimation \'etablie dans la proposition \ref{prop-Uf}.

\begin{cor}
\label{cor-delta}
Conservons les notations de la proposition \ref{prop-d(x)}. Soient $g\in Y_{i_0}^{*}$ une forme lin\'eaire non nulle et $\overline{x}$ un \'el\'ement de $\F_q\varpi^{i_0}\backslash \{0\}$. Posons
\[P=\sum_{k=0}^{f-1} (g\circ\rho(\overline{x}))(X_{0,k})\cdot(1+X_{0,k})\frac{\partial F}{\partial X_{0,k}}.\]
Alors, en notant $U_g=\prod_{w\in\bP(Y_{i_0})\backslash\bP(\ker(g))}w$, on a
$\delta(U_g,P )=+\infty$.
\end{cor}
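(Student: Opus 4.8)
The statement to prove is that $\delta(U_g, P) = +\infty$, where $P = \sum_{k=0}^{f-1}(g\circ\rho(\overline x))(X_{0,k})\cdot(1+X_{0,k})\frac{\partial F}{\partial X_{0,k}}$ and $U_g = \prod_{w\in\bP(Y_{i_0})\backslash\bP(\ker g)}w$. The idea is to combine the Taylor-expansion estimate \eqref{eq-taylor} of Proposition~\ref{prop-d(x)} with the Cramer-rule estimate of Proposition~\ref{prop-Uf}, applied to the space $V_2 = Y_{i_0}$ (of dimension $m = f$) together with the linear map $\varphi = \rho(\overline x)|_{Y_0}\colon Y_0 \to Y_{i_0}$, which is an isomorphism by Lemma~\ref{lem-rho(x)}(ii) since $\overline x \ne 0$ (taking $j=0$).

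First I would rewrite \eqref{eq-taylor} by summing against the weights $\lambda_j$ coming from Proposition~\ref{prop-Uf}: the key point is that, because $\overline x \mapsto g\circ\rho(\overline x)$ is a \emph{bijection} $\F_q\varpi^{i_0}\to Y_0^*$ (Lemma~\ref{lem-rho(x)}(ii) again), running $\overline x$ over $\F_q\varpi^{i_0}$ lets us realize, on the level of $\F_p$-linear maps $Y_0 \to E[[U]]$, the map $g\circ\rho(\overline x_0)\circ(\text{anything we need})$ — more precisely, I would not vary $\overline x$ but rather apply Proposition~\ref{prop-Uf} directly with the fixed data $V_1 = V_2 = Y_{i_0}$, $\varphi = \rho(\overline x)$ restricted appropriately, and the form $g$. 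Proposition~\ref{prop-Uf} then gives, in $\hom_{\F_p}(Y_0, A)$,
\[
\Bigl(\prod_{w\in\bP(Y_{i_0})\backslash\bP(\ker g)}w\Bigr)^{p^s}\cdot(g\circ\rho(\overline x)) \in \sum_{j=1}^{f}\m^{p^s(p^{f-1}-p^{j-1})}\,(\rho(\overline x))^{p^{s+j-1}},
\]
i.e. $U_g^{p^s}\cdot(g\circ\rho(\overline x))(X_{0,k})$ is an $A$-linear combination, with coefficients of degree $\geq p^s(p^{f-1}-p^{j-1})$, of the terms $\rho(\overline x)(X_{0,k})^{p^{s+j-1}}$ for $j = 1,\dots,f$.

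Next, multiply that identity through by $(1+X_{0,k})\frac{\partial F}{\partial X_{0,k}}$ and sum over $k$. On the left we get exactly $U_g^{p^s}\cdot P$. On the right, for each $j$ we get $\m^{p^s(p^{f-1}-p^{j-1})}$ times $\sum_k \rho(\overline x)(X_{0,k})^{p^{s+j-1}}(1+X_{0,k})\frac{\partial F}{\partial X_{0,k}}$, and by \eqref{eq-taylor} (applied with $r = s+j-1$) each such inner sum lies in $\p + \m^{p^{s+j-1}(1+\epsilon)}$. Therefore $U_g^{p^s}\cdot P \in \p + \sum_{j=1}^f \m^{p^s(p^{f-1}-p^{j-1}) + p^{s+j-1}(1+\epsilon)}$. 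Now I would check that the exponent $p^s(p^{f-1}-p^{j-1}) + p^{s+j-1}(1+\epsilon) = p^{s+f-1} + \epsilon\, p^{s+j-1}$ is, for each $j$, at least $p^{s+f-1}(1 + \epsilon p^{-(f-1)})$; since $U_g$ is homogeneous of degree $|\bP(Y_{i_0})\backslash\bP(\ker g)|$ — call it $d$ — we have $\deg(U_g^{p^s}\cdot P) = dp^s + \deg(P)$, so $\nu(U_g^{p^s}P) - \deg(U_g^{p^s}P) \geq p^{s+f-1}(1+\epsilon') - dp^s - \deg(P)$ for some fixed $\epsilon' > 0$, which tends to $+\infty$ as $s\to\infty$. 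Hence $\delta(U_g, P) = +\infty$.

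The main obstacle I anticipate is bookkeeping with the degrees: one must make sure the $\m$-adic degree bounds from Proposition~\ref{prop-Uf} (which live in $A = $ the completion of $\sym(V_2)$, a polynomial ring in $f$ variables) transport correctly into $E[[U]]$ and are compatible with the degree function $\deg$ there, and one must track that the coefficient-degree savings $p^s(p^{f-1}-p^{j-1})$ genuinely compensate for the "worst" term $j = f$ (where the coefficient degree is $0$ but the Taylor bound gives the full $p^{s+f-1}(1+\epsilon)$) as well as for $j = 1$ (where the coefficient degree is maximal, $p^s(p^{f-1}-1)$, but the Taylor bound only gives $p^s(1+\epsilon)$). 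A secondary subtlety is justifying that one may apply \eqref{eq-taylor} with the \emph{same} $\epsilon$ for all the relevant values of $r$ simultaneously, and that "$r$ suffisamment grand" in Proposition~\ref{prop-d(x)} is no obstruction since we let $s\to\infty$. Once these degree estimates are lined up, the conclusion is immediate from the definition of $\delta$.
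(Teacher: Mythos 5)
Your argument is correct and takes essentially the same route as the paper's proof: apply Proposition \ref{prop-Uf} with $V_1=Y_0$, $V_2=Y_{i_0}$, $\varphi=\rho(\overline{x})|_{Y_0}$, multiply by $(1+X_{0,k})\frac{\partial F}{\partial X_{0,k}}$, sum over $k$, and feed in \eqref{eq-taylor} with $r=s+j-1$ to get $U_g^{p^s}P\in\p+\m^{p^s(p^{f-1}+\epsilon)}$, hence $\delta(U_g,P)=+\infty$. The only detail you leave implicit is that $d=\deg U_g$ equals exactly $p^{f-1}$ (so that $p^{s+f-1}(1+\epsilon')-dp^s-\deg(P)\to+\infty$), which is the elementary count the paper records before the estimate.
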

\begin{proof}
D'apr\`es la d\'efinition de la fonction $\delta$ (cf. \S \ref{sec:delta}), on peut supposer que $U_g,P\notin\p$ et il suffit de montrer que
\[
\sup_{r\geq 0}\bigl\{\nu(U_g^{p^r}P)-\deg(U_g^{p^r}P)\bigr\}=+\infty.
\]
Notons d'abord que $U_g$ est un polyn\^ome homog\`ene de degr\'e $p^{f-1}$, de telle sorte que
\[
\deg(U_g^{p^r}P)= p^{r+f-1}+\deg(P).
\]
De plus, la proposition \ref{prop-Uf} appliqu\'ee \`a $\varphi=\rho(\overline{x})\vert_{Y_0}$, $V_1=Y_0$ et $V_2=Y_{i_0}$, ainsi que le morphisme local canonique de changement de base $A\hookrightarrow E[[U]]$, nous donnent
$$
U_g^{p^r}\cdot (g\circ\rho(\overline{x}))(X_{0,k})\in \sum_{j=1}^{f}\m^{p^r(p^{f-1}-p^{j-1})}\cdot \rho(\overline{x})(X_{0,k})^{p^{r+j-1}}
$$
pour tout $r\geq0$, ce qui nous permet d'\'ecrire, au moyen de  l'inclusion \eqref{eq-taylor} 
 \[\begin{array}{rll}  U_g^{p^r}P
&\in& \displaystyle\sum_{k=0}^{f-1}\Bigl(\sum_{j=1}^{f}\m^{p^r(p^{f-1}-p^{j-1})}\cdot \rho(\overline{x})(X_{0,k})^{p^{r+j-1}} \Bigr)(1+X_{0,k})\frac{\partial F}{\partial X_{0,k}}\\
&\subset&\displaystyle\sum_{j=1}^{f}\m^{p^r(p^{f-1}-p^{j-1})}\cdot (\p+\m^{p^{r+j-1}(1+\epsilon)})\\
&\subset& \p+\m^{p^r(p^{f-1}+\epsilon)}\end{array}\]
pour tout $r$ suffisamment grand.
Cela entra\^ine que
$$
\nu(U_g^{p^r}P)\geq p^{r+f-1}+\epsilon p^r
$$
et donc $\delta(U_g,P)=+\infty$, d\`es que $\epsilon>0$.
\end{proof}

\begin{proof}[D\'emonstration du th\'eor\`eme \ref{thm-principal}]
Soit $\p \subset E[[U]]$ un id\'eal premier stable par un sous-groupe ouvert de $\Gamma$. Supposons par l'absurde que $\p$ n'est ni nul, ni maximal, et  soit $i_0$ l'indice satisfaisant les conditions (a) et (b) comme pr\'ec\'edemment.
Il existe $g\in Y_{i_0}^*$ non nul tel que  $\gr(U_g)\notin\sqrt{\gr(\p)}$. En effet, en appliquant le th\'eor\`eme des z\'eros de Hilbert, on voit que l'id\'eal gradu\'e de $E[X_{i_0,0}, \dots, X_{i_0,f-1}]$ engendr\'e par tous les $U_g$ est ouvert dans $E[X_{i_0,0}, \dots, X_{i_0,f-1}]$. La condition (b) implique donc l'existence d'un tel $g$.

D'apr\`es le lemme \ref{lem-delta} $(i)$, on a alors $\delta(U_g)=0$. Le corollaire \ref{cor-delta} et le lemme \ref{lem-delta}(ii) impliquent donc
\[\sum_{k=0}^{f-1}(g\circ\rho(\overline{x}))(X_{0,k})\cdot (1+X_{0,k})\frac{\partial F}{\partial X_{0,k}}\in\p,\ \ \forall \overline{x}\in\F_q\varpi^{i_0}.\]
Le lemme \ref{lem-rho(x)} assure l'existence de $\overline{x}_k\in \F_q\varpi^{i_0}$ tel que $g\circ\rho(\overline{x}_k)=X_{0,k}^{*}$, o\`u $\{X_{0,k}^*, \, 0 \leq kÂ \leq f-1 \}$ d\'esigne la base duale de $Y_0$. On en d\'eduit que
\[ (1+X_{0,k})\frac{\partial F}{\partial X_{0,k}}\in \p,\ \ \forall 0\leq k\leq f-1,\]
et, puisque $1+X_{0,k}$ est inversible dans $E[[U]]$, que $\frac{\partial F}{\partial X_{0,k}}\in \p$. Ceci \'etant vrai pour tout $0 \leq k \leq f-1$, le lemme \ref{lem-controle} implique que $\p$ est contr\^ol\'e par $U_1=\varpi U$.\\

Comme $E[[U]]$ est une $E[[U_1]]$-alg\`ebre finie, l'id\'eal $\p \cap E[[U_1]]$ est un id\'eal premier de $E[[U_1]]$ qui n'est ni nul ni maximal. La multiplication par $\varpi$ induit un isomorphisme de pro-$p$-groupes $U\cong U_1$ qui est $\Gamma$-\'equivariant. En it\'erant l'argument pr\'ec\'edent, on voit qu'en fait $\p$ est contr\^ol\'e par tous les sous-groupes $\varpi^n U$ pour $n \in \N$. Comme $\bigcap_{n \in \N} \varpi^n U=\{0\}$, on obtient $\p=0$.
\end{proof}

\section{Applications}

\begin{cor}
Soit $(\pi, \rho)$ une repr\'esentation lisse irr\'eductible de $G=\GL_2(F)$ sur un $E$-espace vectoriel de dimension infinie. Alors l'action de $E[[U]]$ sur $\pi$ est fid\`ele.
\end{cor}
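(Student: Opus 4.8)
The plan is to deduce the corollary from Théorème~\ref{thm-principal} by exhibiting, inside $\pi$, a non-zero $E[[U]]$-submodule on which the $\Gamma$-action is compatible, so that the annihilator becomes a $\Gamma'$-stable ideal of $E[[U]]$ for a suitable open $\Gamma'$. First I would set $I=\Ann_{E[[U]]}(\pi)$. Since $\Gamma$ normalises $U$ in $G$, conjugation by an element $\gamma\in\Gamma$ induces the continuous automorphism of $E[[U]]$ already used throughout the paper; because $\pi$ is a representation of the whole group $G\supset\Gamma$, for $v\in\pi$ and $a\in E[[U]]$ one has $\gamma\cdot(a\cdot v)=\gamma(a)\cdot(\gamma\cdot v)$. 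Hence if $a\in I$ then $\gamma(a)$ also annihilates $\pi$, i.e.\ $I$ is stable under all of $\Gamma$ (in particular under any open subgroup), so Théorème~\ref{thm-principal} applies verbatim: either $I=0$, and we are done, or $I$ is an open ideal of $E[[U]]$.

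The heart of the argument is therefore to rule out $I$ open. Suppose $I$ is open. Then $E[[U]]/I$ is a finite-dimensional $E$-algebra, and $\pi$, being annihilated by $I$, is a smooth representation of $U$ on which $E[[U]]$ acts through this finite quotient. Equivalently, $U$ acts on $\pi$ through a finite $p$-group quotient $U/U'$ with $U'=\Ker(U\to (E[[U]]/I)^\times)$ open in $U$; so $\pi^{U'}=\pi$, i.e.\ every vector of $\pi$ is fixed by $U'$. Now invoke irreducibility and admissibility: a classical fact for smooth irreducible representations of $\GL_2(F)$ (one may cite, or reprove via a Mackey/Jacquet-module argument) is that the smooth dual, or the representation itself, cannot have all vectors fixed by an open compact subgroup unless $\pi$ is finite-dimensional. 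Concretely, if $U'$ fixes all of $\pi$, then conjugating by the diagonal torus $\smatr{\varpi^{-n}}00{1}$ (or by $\smatr{1}00{\varpi^{n}}$) shows that larger and larger unipotent subgroups $\varpi^{-n}U'\varpi^{n}$ also fix $\pi$; since $\bigcup_n \varpi^{-n}U'\varpi^{n}=U_{\mathrm{big}}$, the full unipotent radical $\smatr1F01$ acts trivially on $\pi$. By irreducibility the subspace of vectors fixed by this unipotent subgroup is either $0$ or all of $\pi$; it is all of $\pi$, so $\pi$ factors through the quotient of $G$ by the normal closure of $\smatr1F01$, which is all of $G$ — forcing $\pi$ to be a subquotient of a representation of the (abelian, hence for smooth irreducible $E$-representations one-dimensional) quotient, contradicting $\dim_E\pi=\infty$.

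I expect the main obstacle to be making the last step — "$U$ fixing $\pi$ through a finite quotient $\Rightarrow$ $\pi$ finite-dimensional" — clean and self-contained, since it is the only place where representation theory of $G$ (as opposed to pure Iwasawa-algebra commutative algebra) enters. The cleanest route is probably: the space $\pi$ is a union of its finite-dimensional $\GL_2(\cO_F)$-subrepresentations by smoothness, and if all of $\pi$ is $U'$-fixed for $U'$ open in $U$ then, translating $U'$ around $\GL_2(\cO_F)$, a whole open subgroup of $\GL_2(\cO_F)$ acts through a finite quotient, after which one uses that an infinite-dimensional smooth irreducible $\pi$ has no non-zero vector fixed by an open \emph{normal} subgroup of $\GL_2(\cO_F)$ together with the fact that, by irreducibility and the structure of $G=\GL_2(F)$, the subgroup generated by $G$-conjugates of such a subgroup is $G$ itself up to the center. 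An alternative, shorter packaging is simply to quote that $I\ne 0$ is impossible because $\pi$ infinite-dimensional forces $E[[U]]$ to act faithfully on the $U$-socle filtration — but the conjugation-to-the-full-unipotent argument above is the one I would write out.
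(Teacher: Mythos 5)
Your proof is correct and is essentially the paper's own argument: you take $I$ to be the annihilator (kernel of $E[[U]]\to\End_E(\pi)$), observe it is stable under the $\Gamma$-action because $\gamma(a)\cdot v=\gamma\cdot(a\cdot(\gamma^{-1}v))$, apply Théorème \ref{thm-principal}, and, if $I$ were open, conjugate the open subgroup $U'\subset\Ker\rho$ by powers of $\smatr{\varpi^{-n}}{0}{0}{1}$ to put the full unipotent group $N=\smatr{1}{F}{0}{1}$ inside $\Ker\rho$, then conclude finite-dimensionality of $\pi$ through the quotient, exactly as in the paper. Two small corrections: the normal closure of $N$ in $G$ is $\SL_2(F)$, not all of $G$ (your subsequent use of the abelian quotient $G/\SL_2(F)\cong F^{\times}$ is the right statement, and matches the paper's ``a normal subgroup containing $N$ contains $\SL_2(F)$''), and since $E$ need not be algebraically closed you should only claim that an irreducible smooth representation of this abelian quotient is finite-dimensional (which suffices), not one-dimensional; also, admissibility is neither assumed in the statement nor needed in your argument.
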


\begin{proof}
Soit $I$ le noyau de $E[[U]] \rightarrow \mathrm{End}_E(\pi)$. C'est un id\'eal de $E[[U]]$ stable sous l'action de $\cO_F^{\times}$. 
D'apr\`es le th\'eor\`eme \ref{thm-principal}, $I$ est ouvert ou nul. Supposons qu'il soit ouvert. Alors il existe un sous-groupe ouvert $U' \subset U$ agissant trivialement sur $\pi$. Soit $N \subset G$ le sous-groupe des matrices unipotentes sup\'erieures. Ce groupe $N$ est une union de conjugu\'es de $U'$. Ainsi le noyau de $\rho :Â \, G \rightarrow \mathrm{Aut}(\pi)$ contient $U'$, donc $N$. Or un sous-groupe distingu\'e de $G$ contenant $N$ contient $\SL_2(F)$. Il est alors bien connu que $\pi$,
\'etant irr\'eductible, est de dimension finie.
\end{proof}
\begin{cor}\label{cor-Ass}
Soit $M$ un $E[[U]]$-module de type fini muni d'une action semi-lin\'eaire de $\Gamma$ et soit $\mathrm{Ass}(M)$ l'ensemble des id\'eaux premiers associ\'es \`a $M$ (\cite[\S1]{Bou}). Alors $\mathrm{Ass}(M)\subset\{\{0\},\m\}$. Si de plus $M$ est de torsion, alors $\mathrm{Ass}(M)\subset\{\m\}$ et $M$ est de longueur finie.
\end{cor}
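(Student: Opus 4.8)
The idea is to deduce the result from Théorème \ref{thm-principal} via a localization / support argument. Let $\mathrm{Ass}(M)=\{\p_1,\dots,\p_r\}$. Since $M$ is of finite type over the noetherian ring $E[[U]]$, this set is finite, and each $\p_j$ is of the form $(0:_{E[[U]]}m)$ for some $m\in M$; in particular each $\p_j=\mathrm{Ann}(m_j)$ for a suitable $m_j$. First I would show that $\mathrm{Ass}(M)$ is stable under the (semi-linear) action of $\Gamma$: if $\gamma\in\Gamma$ and $\p=(0:m)$, then because the action on $M$ is semi-linear over the automorphism $\gamma$ of $E[[U]]$, one checks directly that $(0:\gamma\cdot m)=\gamma(\p)$, so $\gamma(\p)\in\mathrm{Ass}(M)$ whenever $\p\in\mathrm{Ass}(M)$. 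As $\mathrm{Ass}(M)$ is finite, some open subgroup $\Gamma'\subset\Gamma$ fixes every $\p_j$. Then each $\p_j$ is a prime ideal of $E[[U]]$ stable under the open subgroup $\Gamma'$ of $\Gamma$, so Théorème \ref{thm-principal} (or rather its proof, which shows a nonzero $\Gamma'$-stable prime is maximal) forces $\p_j\in\{\{0\},\m\}$. This gives $\mathrm{Ass}(M)\subset\{\{0\},\m\}$.

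For the second assertion, suppose moreover that $M$ is of torsion, i.e. every element of $M$ is annihilated by a nonzero element of $E[[U]]$. Then $\{0\}\notin\mathrm{Ass}(M)$: if $\{0\}=(0:m)$ for some $m$, then $m$ generates a free rank-one submodule, contradicting torsion (here one uses that $E[[U]]$ is a domain, being a regular local ring, so nonzero elements are not zero-divisors). Hence $\mathrm{Ass}(M)\subset\{\m\}$. Now $\mathrm{Supp}(M)$ is the set of primes containing some element of $\mathrm{Ass}(M)$, so $\mathrm{Supp}(M)\subset\{\m\}$; since $M$ is of finite type and nonzero only over the local ring with maximal ideal $\m$, $M$ is a module of finite length. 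Concretely, $M$ finitely generated with support contained in $\{\m\}$ means $M$ is $\m$-power-torsion, and being finitely generated it is killed by some $\m^N$; then $M$ is a finitely generated module over the artinian ring $E[[U]]/\m^N$, hence of finite length.

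I expect the only genuinely delicate point to be the verification that $\mathrm{Ass}(M)$ is $\Gamma$-stable, since the action on $M$ is merely semi-linear and one must be careful that $\gamma$ transports the annihilator ideal to its image under the ring automorphism $\gamma$ of $E[[U]]$; but this is the same bookkeeping already carried out in the proof of Lemme \ref{lem-reduction}, so no new difficulty arises. Everything else is standard commutative algebra (associated primes, support, finite length over a noetherian local ring) combined with the fact, supplied by Théorème \ref{thm-principal}, that $E[[U]]$ admits no nonzero non-maximal prime stable under an open subgroup of $\Gamma$.
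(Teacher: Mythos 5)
Your proof is correct and follows essentially the same route as the paper: transport of annihilators under the semi-linear $\Gamma$-action makes $\mathrm{Ass}(M)$ a finite $\Gamma$-stable set (exactly the bookkeeping of the proof of Lemme \ref{lem-reduction}), an open subgroup then fixes each associated prime, and Th\'eor\`eme \ref{thm-principal} forces each to be $\{0\}$ or $\m$, with the torsion case excluding $\{0\}$ and the support argument giving finite length. The paper's proof is just a terser version of the same argument, so no further comment is needed.
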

\begin{proof}
Comme dans la preuve du lemme \ref{lem-reduction}, on voit que $\mathrm{Ass}(M)$ est fini et ses \'el\'ements  sont tous stables par un certain sous-groupe ouvert de $\Gamma$.  Le premier \'enonc\'e s'en d\'eduit.  Pour le deuxi\`eme, il suffit de remarquer que $\{0\}\notin\mathrm{Ass}(M)$ lorsque $M$ est de torsion.
\end{proof}

\begin{cor}\label{cor-reflexif}
Soit $M$ un $E[[U]]$-module de type fini muni d'une action semi-lin\'eaire de $\Gamma$. Si on note $\mathrm{T}(M)$ le sous-$E[[U]]$-module de torsion de $M$, $\mathrm{T}(M)$ est de longueur finie sur $E[[U]]$. De plus il existe alors un $E[[U]]$-module de type fini r\'eflexif $M_1$ contenant $M/\mathrm{T}(M)$ tel que le quotient de $M_1$ par $M/\mathrm{T}(M)$ soit de longueur finie. De plus, si $[F: \Q_p] \leq 2$, on peut supposer que $M_1$ est un $E[[U]]$-module libre.
\end{cor}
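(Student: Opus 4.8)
Le plan est de traiter séparément les trois assertions, en invoquant deux fois le corollaire \ref{cor-Ass} puis la théorie de structure des modules réflexifs sur un anneau local régulier. Posons $A=E[[U]]$ et $n=[F:\Q_p]$, de sorte que $A$ est un anneau local régulier de dimension $n$, en particulier un anneau intègre intégralement clos.

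Je commencerais par le sous-module de torsion. Comme $A$ est noethérien, $\mathrm{T}(M)$ est de type fini, et il est stable par $\Gamma$: si $m\in\mathrm{T}(M)$ est annulé par $a\in A\setminus\{0\}$, alors $\gamma(m)$ est annulé par $\gamma(a)\neq 0$, puisque $\gamma$ agit sur $A$ par un automorphisme d'anneau. L'action induite sur $\mathrm{T}(M)$ est encore semi-linéaire et $\mathrm{T}(M)$ est un module de torsion; le corollaire \ref{cor-Ass} donne alors directement que $\mathrm{T}(M)$ est de longueur finie.

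Ensuite, posons $N=M/\mathrm{T}(M)$: c'est un $A$-module de type fini sans torsion, muni d'une action semi-linéaire de $\Gamma$. Je prendrais pour $M_1$ son bidual $N^{\vee\vee}$, où $N^{\vee}=\Hom_A(N,A)$, avec l'application canonique d'évaluation $\iota\colon N\to M_1$. Comme $N$ est sans torsion sur l'anneau intègre $A$, $\iota$ est injective et l'on identifie $N$ à un sous-module de $M_1$; le module $M_1$ est réflexif et de type fini car $A$ est noethérien; enfin $N$ et $M_1$ ont même rang, donc $M_1/N$ est de torsion. Le point à vérifier est que l'action de $\Gamma$ se transporte au dual par la formule $(\gamma f)(m)=\gamma\bigl(f(\gamma^{-1}m)\bigr)$: un calcul immédiat montre que $\gamma f$ est $A$-linéaire et que $f\mapsto\gamma f$ est semi-linéaire, d'où une action semi-linéaire sur $M_1=(N^{\vee})^{\vee}$ pour laquelle $\iota$ est équivariante. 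Par conséquent $M_1/N$ est un $A$-module de type fini de torsion muni d'une action semi-linéaire de $\Gamma$, et le corollaire \ref{cor-Ass} entraîne $\mathrm{Ass}(M_1/N)\subset\{\m\}$, c'est-à-dire que $M_1/N$ est de longueur finie. Ceci établit les deux premières assertions.

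Enfin, supposons $n\leq 2$. Si $n=1$, alors $A\cong E[[X]]$ est un anneau de valuation discrète, $N$ est libre, et $M_1=N$ convient déjà (avec $M_1/N=0$). Si $n=2$, j'invoquerais le fait classique qu'un module réflexif de type fini sur un anneau local régulier de dimension $2$ est libre: en appliquant l'inégalité $\mathrm{prof}_A\Hom_A(L,A)\geq\min(2,\mathrm{prof}\,A)$, valable pour tout $A$-module de type fini non nul $L$, à $L=N^{\vee}$, on obtient $\mathrm{prof}_A M_1\geq 2=\dim A$, donc $M_1$ est de Cohen--Macaulay maximal, donc libre d'après le théorème d'Auslander--Buchsbaum. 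Dans les deux cas $M_1$ est libre. Le seul point demandant un vrai soin, plutôt qu'une citation, est la semi-linéarité de l'action de $\Gamma$ sur le bidual $M_1$, qui est ce qui rend le corollaire \ref{cor-Ass} applicable à $M_1/N$; tout le reste est soit un appel direct au corollaire \ref{cor-Ass}, soit de l'algèbre commutative standard.
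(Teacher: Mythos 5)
Your proof is correct and follows essentially the same route as the paper: twist the $\Gamma$-action onto the dual by $(\gamma\cdot f)(x)=\gamma f(\gamma^{-1}x)$, pass to the bidual, and apply le corollaire \ref{cor-Ass} aux morceaux de torsion. Les seules différences sont mineures : vous dualisez $M/\mathrm{T}(M)$ plutôt que $M$ lui-même (le papier récupère $\mathrm{T}(M)=\Ker\bigl(M\to M^{**}\bigr)$ et traite aussi $\Coker$), et en dimension $2$ vous redémontrez la liberté des modules réflexifs par un argument de profondeur et la formule d'Auslander--Buchsbaum là où le papier cite la proposition $2$ de Samuel.
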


\begin{proof}
On note $M^*=\hom_{E[[U]]}(M,E[[U]])$ le dual de $M$. On le munit d'une action de $\Gamma$ de la façon suivante. Si $\gamma \in \Gamma$, $f \in M^*$ et $x \in M$, on pose $(\gamma \cdot f)(x)=\gamma f(\gamma^{-1}x)$. L'application canonique de $ i : \, M \rightarrow M^{**}$ est alors $\Gamma$-\'equivariante. De plus, si $K$ d\'esigne le corps des fractions de $A$, $i \otimes_A K$ est un isomorphisme de $K$-espaces vectoriels de dimension finie. On en conclut que $\Ker (i)$ et $\Coker (i)$ sont des $E[[U]]$-modules de type fini, de torsion et stables par $\Gamma$. Ils sont donc de longueur finie d'apr\`es le corollaire \ref{cor-Ass}. Comme $M^{**}$ est sans torsion, on a en fait  $\Ker(i)=\mathrm{T}(M)$, ceci permet de conclure en prenant $M_1=M^{**}$.
\\
Si $\dim(E[[U]])=1$, $E[[U]]$ est un anneau de valuation discr\`ete, tout $E[[U]]$-module de type fini sans torsion est donc libre. D'apr\`es la proposition $2$ de \cite{Samuel}, tout module r\'eflexif de type fini sur un anneau local noeth\'erien r\'egulier de dimension $2$ est libre, donc $M^{**}$ est libre si $\dim(E[[U]])=2$.
\end{proof}

On peut se demander \`a quel point, lorsque $[F: \Q_p] > 2$, un $E[[U]]$-module r\'eflexif de type fini muni d'une action semi-lin\'eaire de $\Gamma$ est \'eloign\'e d'un $E[[U]]$-module libre. Nous ne connaissons pas d'exemple de tel module qui ne soit pas libre. Le mieux que l'on puisse dire sur un tel module est contenu dans la proposition suivante.

\begin{prop}
Soit $M$ un $E[[U]]$-module r\'eflexif de type fini muni d'une action semi-lin\'eaire de $\Gamma$. Pour tout id\'eal premier $\p \neq \m$ de $E[[U]]$, le $E[[U]]_{\p}$-module $M_{\p}$ est libre.
\end{prop}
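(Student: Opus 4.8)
The plan is to ramener l'\'enonc\'e \`a une application directe du th\'eor\`eme \ref{thm-principal}, en \'etudiant le lieu de non-libert\'e
\[ Z=\bigl\{\p\in\Spec E[[U]] \ :\ M_\p \ \text{n'est pas libre sur}\ E[[U]]_\p\bigr\}. \]
Comme $E[[U]]$ est noeth\'erien et $M$ de type fini, donc de pr\'esentation finie, le lieu o\`u $M$ est localement libre est ouvert, de sorte que $Z$ est une partie ferm\'ee de $\Spec E[[U]]$ ; \'ecrivons $Z=V(\a)$ avec $\a$ un id\'eal radiciel de $E[[U]]$. Il suffit alors de montrer que $Z\subseteq\{\m\}$, et l'on peut bien s\^ur supposer $M\neq 0$.

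On commencerait par v\'erifier que $Z$ est stable sous l'action de $\Gamma$ sur $\Spec E[[U]]$. Pour $\gamma\in\Gamma$, notons $\sigma_\gamma$ l'automorphisme topologique de $E[[U]]$ induit par la conjugaison par $\gamma$ sur $U$. La semi-lin\'earit\'e de l'action de $\Gamma$ sur $M$ signifie que $\gamma\colon M\to M$ est une bijection additive v\'erifiant $\gamma(am)=\sigma_\gamma(a)\gamma(m)$ pour $a\in E[[U]]$ et $m\in M$ ; autrement dit, $\gamma$ est un isomorphisme de $E[[U]]$-modules entre $M$ et le module $\sigma_\gamma^{*}M$ d\'eduit de $M$ par restriction des scalaires le long de $\sigma_\gamma$. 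Le lieu de non-libert\'e de $\sigma_\gamma^{*}M$ se d\'eduit de $Z$ par l'action de $\gamma$ sur $\Spec E[[U]]$, d'o\`u la stabilit\'e de $Z$, et donc de $\a$, sous $\Gamma$. (Alternativement, on peut utiliser l'\'egalit\'e $Z=\bigcup_{i\geq 1}\Supp\Ext^{i}_{E[[U]]}(M,E[[U]])$, valable puisque $E[[U]]$ est r\'egulier, chaque $\Ext^i$ h\'eritant d'une action semi-lin\'eaire de $\Gamma$.) On observerait ensuite que le point g\'en\'erique de $\Spec E[[U]]$ n'appartient pas \`a $Z$ : la localisation $M_{(0)}$ est un espace vectoriel de dimension finie sur le corps des fractions de $E[[U]]$, donc libre ; ainsi $Z\neq\Spec E[[U]]$, c'est-\`a-dire $\a\neq 0$.

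Il ne resterait plus qu'\`a conclure : si $Z$ contenait un id\'eal premier $\p\neq\m$, alors $\a\subseteq\p\subsetneq\m$, donc $\a$ serait un id\'eal non nul de $E[[U]]$, stable sous $\Gamma$, et distinct de $\m$ ; le th\'eor\`eme \ref{thm-principal} impliquerait que $\a$ est ouvert, d'o\`u $V(\a)=\{\m\}$, ce qui contredirait $\p\in V(\a)=Z$. On aurait donc $Z\subseteq\{\m\}$, autrement dit $M_\p$ serait libre sur $E[[U]]_\p$ pour tout id\'eal premier $\p\neq\m$. Le point demandant le plus de soin est la stabilit\'e de $Z$ sous $\Gamma$ compte tenu de la semi-lin\'earit\'e (il faut suivre pr\'ecis\'ement la restriction des scalaires), ainsi que le caract\`ere ferm\'e de $Z$, qui est classique ; une fois ces points acquis, c'est le th\'eor\`eme principal qui fournit l'essentiel, et la r\'eflexivit\'e de $M$ n'est en fait pas utilis\'ee dans cet argument.
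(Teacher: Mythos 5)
Votre argument est correct, mais il emprunte une route r\'eellement diff\'erente de celle de l'article. Vous \'etudiez le lieu de non-libert\'e $Z$ : il est ferm\'e car le lieu de libert\'e locale d'un module de pr\'esentation finie sur un anneau noeth\'erien est ouvert (r\'esultat classique, que votre variante via $Z=\bigcup_{i\geq 1}\Supp\,\Ext^i_{E[[U]]}(M,E[[U]])$ red\'emontre d'ailleurs proprement sur l'anneau r\'egulier $E[[U]]$), il est stable par $\Gamma$ gr\^ace \`a la semi-lin\'earit\'e (l'isomorphisme $M\simeq \sigma_\gamma^*M$ transporte la libert\'e de $M_{\sigma_\gamma(\p)}$ \`a celle de $(\sigma_\gamma^*M)_{\p}$), et il ne contient pas le point g\'en\'erique puisque $M_{(0)}$ est libre sur le corps des fractions ; l'id\'eal radiciel $\a$ d\'efinissant $Z$ est donc non nul et $\Gamma$-stable, et le th\'eor\`eme \ref{thm-principal} le force \`a \^etre ouvert, d'o\`u $V(\a)\subseteq\{\m\}$ et la conclusion. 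L'article proc\`ede autrement : il consid\`ere les puissances tensorielles $\otimes^i_A M$ (avec $A=E[[U]]$), observe que leurs sous-modules de torsion sont stables par $\Gamma$, donc de support contenu dans $\{\m\}$ par le corollaire \ref{cor-Ass}, en d\'eduit que $\otimes^i_{A_{\p}}M_{\p}$ est sans torsion pour tout $i\geq 1$, puis conclut par le th\'eor\`eme 3.2 d'Auslander sur les modules sur un anneau local r\'egulier non ramifi\'e ($A_{\p}$ l'\'etant car de caract\'eristique $p$). Votre approche \'evite compl\`etement le recours \`a Auslander et ne demande que l'ouverture du lieu localement libre ; elle a aussi l'avantage de rendre manifeste que la r\'eflexivit\'e de $M$ est superflue (remarque qui vaut d'ailleurs \'egalement pour la preuve de l'article, o\`u elle n'intervient pas non plus : l'hypoth\`ese refl\`ete simplement le cas d'int\'er\^et issu du corollaire \ref{cor-reflexif}). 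En contrepartie, la preuve de l'article est tr\`es courte une fois la r\'ef\'erence \`a Auslander admise et n'utilise que la machinerie des id\'eaux premiers associ\'es d\'ej\`a mise en place, l\`a o\`u la v\^otre demande de justifier soigneusement le caract\`ere ferm\'e et la $\Gamma$-stabilit\'e de $Z$, ce que vous faites correctement.
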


\begin{proof}
Posons $A=E[[U]]$. Pour $i\geq 1$, notons $\otimes^i_AM:=M\otimes_{A} \cdots\otimes_{A} M$ le $A$-module produit tensoriel de $i$ copies de $M$. Ce sont des $A$-modules de type fini munis naturellement d'une action semi-lin\'eaire de $\Gamma$. Soit $\mathrm{T}(\otimes^i_AM)$ le sous-module de torsion de $\otimes^i_AM$; il est clairement stable par $\Gamma$. D'apr\`es le corollaire \ref{cor-Ass}, on voit que $\mathrm{Ass}\bigl(\mathrm{T}(\otimes^i_AM)\bigr)\subset\{\m\}$, d'o\`u $\bigl(\mathrm{T}(\otimes^i_AM)\bigr)_{\p}=0$ pour tout id\'eal premier $\p\neq \m$ par \cite[\S3, corollaire 1]{Bou}.
On d\'eduit alors des isomorphismes naturels (avec les notations \'evidentes)\[\bigl(\mathrm{T}(\otimes^i_AM)\bigr)_{\p}\cong \mathrm{T}\bigl((\otimes^i_AM\bigr)_{\p})\cong \mathrm{T}(\otimes^i_{A_{\p}}M_{\p})\] que $\otimes^i_{A_{\p}}M_{\p}$  est un $A_{\p}$-module sans torsion pour tout $i\geq 1$. Comme $A_{\p}$ est un anneau local r\'egulier non ramifi\'e  puisque $A$ l'est (car de caract\'eristique $p$, voir \cite[p.634]{Aus}), le th\'eor\`eme 3.2 de \cite{Aus} nous permet de conclure.
\end{proof}

\noindent
IRMAR - UMR CNRS 6625\\
Campus Beaulieu, 35042 Rennes cedex, France\\
{\it Adresse e-mail:} {\ttfamily yongquan.hu@univ-rennes1.fr}
\\

\noindent
Laboratoire de Math\'ematiques de Versailles\\
UMR CNRS 8100\\
45, avenue des \'Etats Unis - B\^atiment Fermat\\
F--78035 Versailles Cedex, France\\
{\it Adresse e-mail:} {\ttfamily Stefano.Morra@math.uvsq.fr}\\
{\it Adresse e-mail:} {\ttfamily benjamin.schraen@math.uvsq.fr}

\end{document}